\definecolor{WIMgreen}{RGB}{60 134 132}
\definecolor{red_pers}{RGB}{204 37 41}
\definecolor{UMblue}{RGB}{4 47 86}
\definecolor{myteal}{RGB}{0 123 137}
\definecolor{dartmouthgreen}{rgb}{0.05, 0.5, 0.06}\definecolor{cobalt}{rgb}{0.0, 0.28, 0.67}\definecolor{coolblack}{rgb}{0.0, 0.18, 0.39}
\definecolor{glaucous}{rgb}{0.38, 0.51, 0.71}\definecolor{hooker\'sgreen}{rgb}{0.0, 0.44, 0.0}\definecolor{lemonchiffon}{rgb}{1.0, 0.98, 0.8}\definecolor{oucrimsonred}{rgb}{0.6, 0.0, 0.0}\definecolor{radicalred}{rgb}{1.0, 0.21, 0.37}\definecolor{raspberry}{rgb}{0.89, 0.04, 0.36}\definecolor{royalazure}{rgb}{0.0, 0.22, 0.66}
\definecolor{dex}{RGB}{138 18 34}
\definecolor{darkgreen}{RGB}{0 69 0}
\definecolor{darkblue}{RGB}{0 0 99}
\newcommand{\R}{\mathbb{R}}
\newcommand{\N}{\mathbb{N}}
\newcommand{\Ex}{\mathbb{E}}
\newcommand{\PR}{\mathbb{P}}
\newcommand{\ind}[1]{\mathds{ 1 }_{\{{#1}\}}}
\newcommand{\inda}[1]{\mathds{ 1 }_{{#1}}}
\newcommand*{\f}{\mathcal{F}}
\newcommand{\eps}{\varepsilon}
\DeclareMathOperator{\Exp}{Exp}
\DeclareMathOperator{\ccc}{C}
\DeclareMathOperator{\B}{B}
\DeclareMathOperator{\rm}{RM}
\DeclareMathOperator{\id}{id}
\newcommand*\bigcdot{\mathpalette\bigcdot@{.6}}
\newcommand*\bigcdot@[2]{\mathbin{\vcenter{\hbox{\scalebox{#2}{$\,\m@th#1\bullet\,$}}}}}
\def\XXint#1#2#3{{\setbox0=\hbox{$#1{#2#3}{\int}$ }
\vcenter{\hbox{$#2#3$ }}\kern-.6\wd0}}
\theoremstyle{definition}
\newtheorem*{remark*}{Remark}
\newtheorem{theorem}{Theorem}
\newtheorem{lemma}[theorem]{Lemma}
\title{\fontsize{16}{19} \selectfont 		On differentiability of reward functionals corresponding to Markovian randomized stopping times
}
\author{Boy Schultz\thanks{Kiel University, Mathematical Department\\email: \href{mailto:schultz@math.uni-kiel.de}{schultz@math.uni-kiel.de}}}
\date{November 18, 2024}
\begin{document}

\maketitle
\begin{abstract}
We conduct an investigation of the differentiability and continuity of reward functionals associated to Markovian randomized stopping times. Our focus is mostly on the differentiability, which is a crucial ingredient for a common approach to derive analytic expressions for the reward function.
\end{abstract}
%In many games of stopping equilibria can be constructed with a guess and verify approach based on a suitable verification theorem. In many cases, parameters of the equilibrium candidate are determined by smooth fit conditions. To check for smooth fit one needs to investigate the differentiability of the reward functional corresponding to the equilibrium candidate. We derive conditions for the differentiability of the reward functionals corresponding to Markovian randomized stopping times in terms of their stopping rate measures.
%In diffusion driven games of stopping subgame perfect Nash equilibria correspond to Markovian randomized stopping times as candidate strategies. 
\noindent
\small{\textit{\href{https://mathscinet.ams.org/mathscinet/msc/msc2020.html}{2020 MSC:}} Primary 60G40; secondary 60J35, 60J55\\
\textit{key words:} reward functional, Markovian randomized stopping times, linear diffusion, regularity, differentiability}
\normalsize

\section{Introduction}

A common problem in stochastic calculus is to derive analytic expressions for reward functionals 
\begin{align*}
    J_\tau(x):=\Ex_x[e^{-r\tau} g(X_\tau)],
\end{align*} with a given stopping time $\tau$. In the literature this problem has been discussed extensively for a variety processes $X$, functions $g$ and stopping times $\tau$, see e.g.\ \cite{dynkin_markov2,kyprianou2006introductory,oksendal, peskir}. Here, let us first consider the case where $X=(X_t)_{t\in[0,\infty)}$ is a linear diffusion with values in an interval $I$, $r\ge0$ is a discount factor and $g$ a Hölder continuous payoff function. The solution to our problem is most well know if $\tau$ is a first exit time of some open interval $(a,b)\subset I$, see e.g.\ \cite[Section 9]{oksendal}. 

Due to their connection to subgame perfect Nash equilibria, so called \textit{Markovian randomized stopping times\footnote{The origin of the name is explained in \cite[Section 3.2]{decamps2022mixed}}} are of particular interest in the context of games of stopping and were recently considered in \cite{BodnariuChristensenLindensjoe2022,christensen2023time, christensen2024existencemarkovianrandomizedequilibria, decamps2022mixed, ekstrom2017dynkin, riedel2017subgame}. This class of stopping times will be formally introduced in Section \ref{sec:model} and contains, among others, all stopping times of the form
\begin{align}
    \tau = \inf\bigg\{ t\ge 0: \int_0^t\psi(X_s)ds \ge E \,\text{ or }\, X_t\not \in (a,b)\bigg\} \label{int:2}
\end{align} with $a<b\in I$, a Hölder continuous function $\psi: I \to [0,\infty)$ and an exponentially distributed random variable $E$. Here, $\psi$ plays the role of an infinitesimal stopping rate. Note that first exit times of open intervals are a contained as a special case. For $\tau$ given by \eqref{int:2} the corresponding reward function $J_\tau$ is typically determined as the unique solution to the ordinary differential equation
\begin{align}
     (A-r-\psi) J_\tau = -g\psi \label{int:3}
\end{align} on $(a,b)$ with boundary conditions
\begin{align}
     \quad J_\tau(a)=g(a),\quad J_\tau(b)=g(b)\label{int:4}
\end{align} where $A$ denotes the differential generator of the diffusion $X$, see \cite[Theorem 13.16]{dynkin_markov2}. Provided that the coefficients of the differential operator $A$ are sufficiently nice, this allows to to determine an analytic expression for $J_\tau$. 

However, if $\psi$ is merely piecewise Hölder continuous $J_\tau$ must no longer be $\ccc^2$ in the  jumps $\psi$. Thus \eqref{int:3} cannot hold as a second order ODE on $(a,b)$ in the classical sense. This situation is encountered in \cite{christensen2023time}, where equilibrium stopping rates $\psi$ are piecewise Hölder continuous by the nature of the problem. Generally, discontinuous stopping rates $\psi$ could be considered in any kind of stopping game in which Markovian randomized stopping times are equilibrium candidates, such as \cite{BodnariuChristensenLindensjoe2022} or \cite{christensen2020time}.

Now let $\psi$ only satisfy Hölder conditions on $(x_0,x_1),(x_1,x_2),(x_2,x_3),...,(x_{n-1},x_n)$, $x_0:=a,x_n:=b$. General theory still provides that on each interval $(x_{i-1},x_i)$, $i=1,...,n$ the function $J_\tau$ is twice continuously differentiable and satisfies \eqref{int:3}, see Theorem \ref{thm:main}, \eqref{diff2}. To salvage the approach and recover an analytic expression for $J_\tau$ from \eqref{int:3} posed on $(x_{i-1},x_{i})$, $i=1,...,n$ we need appropriate boundary conditions on each interval. Since \eqref{int:3} is a second order differential equation, as a rule of thumb, we need two boundary conditions for each interval $(x_{i-1},x_{i})$, $i=1,...,n$. Taking into account \eqref{int:4} we still need $2n - 2$ conditions. It is comparatively simple to show that $J_\tau$ is continuous on $(a,b)$ which yields the first $n-1$ conditions, see Theorem \ref{thm:main}, \eqref{cont}. The challenging part is to show that $J_\tau$ is even differentiable on $(a,b)$ which gives the remaining $n-1$ conditions, see Theorem \ref{thm:main}, \eqref{diff}.

In fact it is surprisingly simple to put that idea in more precise terms. For that let $\tau^{(a,b)}:=\inf\{ t \ge 0: X_t \not \in (a,b)\}$ and assume $\Ex_x[\tau^{(a,b)}]<\infty$. Now, if $\tilde J$ is $\ccc^0$ on $[a,b]$, $\ccc^1$ on $(a,b)$, $\ccc^2$ on $\bigcup_{i\in\{1,...,n\}} (x_{i-1},x_i)$ and satisfies \eqref{int:3} on $(x_{i-1},x_i)$, $i=1,...,n$ as well as \eqref{int:4}, then $\tilde J = J_\tau$. This follows from the subsequent computation, which will by justified step by step below. We have
\begin{align*}
    \tilde J(x) 
    =& \Ex_x[ e^{-r\tau^{(a,b)} -\int_0^{\tau^{(a,b)}} \psi(X_t)dt} J(X_{\tau^{(a,b)}})]\\
    &-\Ex_x\left[ \int_0^{\tau^{(a,b)}} \ind{X_t\not\in \{x_1,...,x_{n-1}\}}e^{-rt -\int_0^t \psi(X_s)ds} (A-r-\psi) \tilde J(X_t)  dt \right] \\
    =& \Ex_x[ e^{-r{\tau^{(a,b)}} -\int_0^{\tau^{(a,b)}} \psi(X_t)dt} g(X_{\tau^{(a,b)}})] +\Ex_x\left[ \int_0^{\tau^{(a,b)}} e^{-rt -\int_0^t \psi(X_s)ds} \psi(X_t)g(X_t)  dt \right] \\
    =& J_\tau(x).
\end{align*}
In the first step we use a version of Dynkin's formula based on the generalized Itô formula \cite[Chapter IV, Theorem 71]{protter2005stochastic}. This formula requires that $\tilde J$ is $\ccc^1$ with absolutely continuous derivative. Here, the absolute continuity of $\tilde J$ is implied by the piecewise $\ccc^2$ assumption. Note that Dynkin's formula also involves the assumption $\Ex_x[\tau^{(a,b)}]<\infty$. Besides that we use that $\Ex_x[\int_0^{t}\ind{X_s\in \{x_1,...,x_{n-1}\}}ds]=0$ for all $t\ge 0$. In the second step we apply \eqref{int:3} and \eqref{int:4}. The last equality follows from \eqref{eq:zerl2} which will be shown later on.

The main contribution of the present paper is a proof of the $\ccc^1$ property of $J_\tau$ with $\tau$ given by \eqref{int:2}, but with $\psi$ being merely piecewise Hölder continuous, which is a crucial part of the previous argument. Additionally, we show continuity of $J_\tau$ for general Markovian randomized stopping times $\tau$ under very mild conditions. Both results are stated in Theorem \ref{thm:main} next to a known result which provides sufficient conditions for $J_\tau$ to be $\ccc^2$.

\section{General framework}\label{sec:model}
We consider a regular linear Itô diffusion $X = (X_t)_{t \in [0,\infty)}$ taking values in an interval $I \subset \R$ and defined on a filtered probability space $(\Omega, \f, (\f_t)_{t \in [0,\infty)},\PR)$ satisfying the usual hypotheses. Generally we assume that  the behavior of $X$ in the interior $I^\circ$ of $I$ is governed by the stochastic differential equation
\begin{align}
    d X_t = \mu(X_t) dt + \sigma(X_t) dW_t , \quad X_0=x\in I^\circ\label{eq:dynamics}
\end{align} with an $(\f_t)_{t \in [0,\infty)}$-adapted, real valued standard Brownian motion $W = (W_t)_{t \in [0,\infty)}$ and Lip\-schitz continuous coefficients $\mu: I \to \R$, $\sigma: I \to (0,\infty)$. A jointly continuous version of the local time process of $X$ at $y\in I$ will be denoted by $(l^y_t)_{t \in [0,\infty)}$. The existence of such a version follows e.g. from \cite[Theorem (44.2)]{RogersWilliamsVol2}. Let $E\sim \Exp(1)$ be a random variable on $(\Omega, \f, \PR)$ which is independent from $X$. We set $\f^X_\infty := \sigma(X_t:t\ge0)$ and denote the canonical shift operator associated to $X$ by $\theta$. If $Y$ is any random variable on $(\Omega, \f, (\f_t)_{t \in [0,\infty)},\PR)$ with values in some metric space, we denote its distribution by $\PR^Y$. As usual, we write $\PR_x$ for the conditional distribution of $\PR$ given $X_0=x$ and $\Ex_x$ for the corresponding expectation operator. We write $\Ex_{\bigcdot}[...]$ for the function $ I\ni x\mapsto \Ex_x[...]$.

For open or closed $D\subset I$ we denote the Borel $\sigma$-algebra on $D$ by $\mathcal{B}(D)$, the space of Radon measures on $D$ by $\rm(D)$ and set $\B(D):=\{f:D\to \R: f$ is bounded and measurable$\}$, $\ccc^0(D):=\{f:D\to \R: f$ is continuous$\}$, $\ccc^m(D):=\{f:D\to \R: f$ is $m$ times continuously differentiable$\}$. If $f:D\to [0,\infty)$ is Lebesgue integrable, we denote the measure that maps $\Gamma\in \mathcal{B}(D)$ to $\int_\Gamma f dx$ by $f dx$. For $D'\subset D \subset I$ and functions $f:D\to \R$ or measures $\lambda\in \rm (D)$ we denote the restrictions of $f$ and $\lambda$ to $D'$ by $f\vert_{D'}$ and $\lambda\vert_{D'}$ respectively. We call $f:D\to \R$ piecewise Hölder continuous, if there are $\inf D = x_0 <x_1< ... < x_n =\sup D$, $n\in \N$ such that $f\vert_{D\cap (x_{i-1},x_i)}$ is Hölder continuous (possibly with different exponents) for each $i\in \{1,...,n\}$. 

We denote the first exit time from open $D\subset I$ by
\begin{align*}
    \tau^D &:= \inf \{ t \ge 0 : X_t \not\in D\}
\end{align*} For $D \subset I$ open (in $I$) and $\lambda \in \rm(D)$. We set the additive functional $A^{D,\lambda} = (A^{D,\lambda}_t)_{t \in [0,\infty)}$ generated by $X,D$ and $\lambda$ to be given by
\begin{align}
    A^{D,\lambda}_t(\omega):= \int_D l^y_t(\omega) \lambda(dy) + \infty \ind{\tau^D(\omega) \le t},\quad t \ge 0.\label{func}
\end{align}  We define the \textit{Markovian randomized (stopping) time} generated by $D$, $\lambda$ and $E$ as
\begin{align}
    \tau^{D,\lambda} := \inf\{ t \ge 0: A_t^{D,\lambda} \ge E\}.\label{eq:taudlambda}
\end{align}The space of all Markovian randomized stopping times based on the random variable $E$ is denoted by
\begin{align*}
    \mathcal{Z}:= \mathcal{Z}(E):=\{ \tau^{D,\lambda}: D\subset I \text{ open (in } I),  \lambda\in \rm(D)\}.
\end{align*}  

Markovian randomized stopping times $\tau\in \mathcal Z$ feature a Markov property of the following type: With a natural extension of the shift operator $\theta$ it holds that
\begin{align*}
    \ind{\tau \ge \sigma} \tau = \ind{\tau\ge\sigma} \theta_\sigma \circ \tau + \sigma
\end{align*} in distribution for all $\f^X_\infty$-measurable $\sigma$, see \cite{christensen2023time}. 
More detailed discussions can be found in \cite[Section 3]{decamps2022mixed}.

We fix a measurable function $g:I\to \R$ and a constant $r\ge 0$. Based on that define the \textit{reward functional} corresponding to $\tau\in \mathcal{Z}$ via
\begin{align*}
    J_{\tau}(x):=J(x,\tau):= \Ex_x[e^{-r\tau}g(X_{\tau})],\quad x\in I
\end{align*} whenever the right hand side exists possibly taking value $\pm \infty$. Note that 
\begin{align*}
    J_{\tau^{D,\lambda}}(x) = g(x)
\end{align*} for all $\tau^{D,\lambda}\in \mathcal{Z}$ and all $x\in I\setminus D$, so the behavior of $J_{\tau^{D,\lambda}}$ on $I\setminus D$ is predetermined by $g$. Thus, in the following we are only concerned with differentiability on $D$ and with continuity on the closure $\overline{D}$ of $D$ (in $I$).

\section{Main result}
\begin{theorem}\label{thm:main}
Let $\tau=\tau^{D,\lambda}\in \mathcal{Z}$, $A:=A^{D,\lambda}$ and $\inf I \le a<b\le \sup I$ such that $(a,b)\subset D$ and $\Ex_x[\tau^{(a,b)}]<\infty$ for all $x\in (a,b)$. 
\begin{enumerate}[(i)]
    \item \label{cont} If $g$ is bounded on $(a,b)$, $\lambda((a,b))<\infty$ and $J_\tau(y)\in \R$ for all $y\in I\cap \{a,b\}$ then $J_\tau\vert_{[a,b]}\in \ccc^0([a,b])$.
    
    \item \label{diff2} Let $\psi:(a,b)\to [0,\infty)$ be Hölder continuous. Suppose that $\lambda\vert_{(a,b)}= \frac{\psi}{\sigma}dx$, that $g$ is Hölder continuous on $(a,b)$ and that $J_\tau(y)\in \R$ for all $y\in I\cap \{a,b\}$. Then $J_\tau\vert_{(a,b)}\in \ccc^2((a,b))$.

    \item \label{diff} Let $\psi:(a,b)\to [0,\infty)$ be piecewise Hölder continuous. Suppose that $\lambda\vert_{(a,b)}= \frac{\psi}{\sigma}dx$, that $g$ is Hölder continuous on $(a,b)$ and that $J_\tau(y)\in \R$ for all $y\in I\cap \{a,b\}$. Then $J_\tau\vert_{(a,b)}\in \ccc^1((a,b))$.
\end{enumerate}
\end{theorem}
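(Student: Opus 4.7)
By part (i), $J_\tau \in \ccc^0([a,b])$; in particular, $J_\tau$ is finite at every point of the partition $a=x_0<x_1<\dots<x_n=b$ on whose open pieces $\psi$ is H\"older. Applying (ii) with each sub-interval $(x_{i-1},x_i)$ in the role of $(a,b)$ (all hypotheses carry over: $(x_{i-1},x_i)\subset D$, $\Ex_x[\tau^{(x_{i-1},x_i)}]\le\Ex_x[\tau^{(a,b)}]<\infty$, H\"older continuity of $\psi$ and $g$ on the sub-interval, and finiteness of $J_\tau$ at the endpoints from (i)) yields $J_\tau\in \ccc^2((x_{i-1},x_i))$ for every $i$. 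It therefore suffices to prove differentiability of $J_\tau$ at each interior partition point $x_i$, $i=1,\dots,n-1$.

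Fix such an $i$ and choose $\delta>0$ with $[x_i-\delta,x_i+\delta]\subset(x_{i-1},x_{i+1})$, so that $x_i$ is the only discontinuity of $\psi$ inside. Set $\tau^*:=\tau^{(x_i-\delta,x_i+\delta)}$. Splitting $J_\tau(x)$ according to $\{\tau<\tau^*\}$ versus $\{\tau\ge\tau^*\}$, integrating out the independent $\Exp(1)$-variable $E$ (using $dA_t=\psi(X_t)\,dt$ on $\{t<\tau^D\}$ via the occupation-time formula), and invoking the Markov property of $\tau$ at $\tau^*$ together with the memorylessness of $E$, one obtains on $(x_i-\delta,x_i+\delta)$
\begin{align*}
    J_\tau(x)=u(x)+v(x),
\end{align*}
with
\begin{align*}
    u(x)&:=\Ex_x\!\left[e^{-r\tau^*-\int_0^{\tau^*}\psi(X_s)\,ds}\,J_\tau(X_{\tau^*})\right],\\
    v(x)&:=\Ex_x\!\left[\int_0^{\tau^*}e^{-rt-\int_0^t\psi(X_s)\,ds}g(X_t)\psi(X_t)\,dt\right].
\end{align*}

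The decisive step is to show $u,v\in\ccc^1((x_i-\delta,x_i+\delta))$. I would recast the governing ODE $\tfrac12\sigma^2\phi''+\mu\phi'-(r+\psi)\phi=h$ (with $h\equiv0$ for $u$ and $h=-g\psi$ for $v$) as a linear first-order planar system in $(\phi,\phi')$ whose coefficient matrix is bounded on $[x_i-\delta,x_i+\delta]$: $\sigma$ is continuous and bounded away from $0$ there, $\mu$ is Lipschitz, and $\psi$, though possibly discontinuous at $x_i$, is bounded. Carath\'eodory's theorem for linear systems with $L^\infty$-coefficients then produces two linearly independent absolutely continuous (hence $\ccc^1$, and $\ccc^2$ on each side of $x_i$) homogeneous solutions $\Phi_1,\Phi_2$ on the whole interval. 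Choose $c_1,c_2$ so that $\hat u:=c_1\Phi_1+c_2\Phi_2$ matches the boundary values $J_\tau(x_i\pm\delta)$, and construct a $\ccc^1$ solution $\hat v$ to the inhomogeneous problem with vanishing boundary data (e.g.\ by variation of parameters). Since $\hat u,\hat v$ have bounded, absolutely continuous derivatives, the generalized It\^o/Dynkin formula of \cite[Chapter IV, Theorem 71]{protter2005stochastic} applies and, using the ODE off the Lebesgue-null occupation set $\{X_t=x_i\}$, reproduces exactly the defining probabilistic representations of $u$ and $v$; hence $\hat u=u$, $\hat v=v$, and so $J_\tau=u+v$ is $\ccc^1$ at $x_i$.

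The main obstacle is precisely this final identification: neither the one-sided ODE analysis nor It\^o's formula by itself can force the one-sided derivatives of $u$ and $v$ at $x_i$ to agree. It is only the combination of Carath\'eodory's construction, which gives $\ccc^1$ solutions across the jump of $\psi$, with the uniqueness that the generalized It\^o formula imposes on the probabilistic side, that ultimately produces the required matching of one-sided derivatives.
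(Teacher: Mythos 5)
Your proposal addresses only part \eqref{diff}, taking \eqref{cont} and \eqref{diff2} as given; the theorem as stated includes all three parts, so strictly speaking the first two are unproven in your write-up (the paper derives \eqref{cont} from a decomposition of $J_\tau$ via the conditional law of $\tau$ given $\f^X_\infty$ together with Lemma \ref{lemma:cont}, and \eqref{diff2} from a classical result of Dynkin). For \eqref{diff} itself your route is genuinely different from the paper's and is, in outline, sound. The paper shows that $J_\tau$ lies in the domain of the characteristic operator at $x_i$ via Dynkin-style lemmas on multiplicative functionals and subprocesses (Lemmas \ref{lemma:thm9.7}, \ref{lemma:thm13.11}, \ref{lemma:thm13.12}), then applies the local-time It\^o formula to write $(\Ex_x[J_\tau(X_{\tau_h})]-J_\tau(x))/\Ex_x[\tau_h]$ as a convergent term plus $\tfrac12(\partial_xJ_\tau(x_i+)-\partial_xJ_\tau(x_i-))\,\Ex_x[l^{x_i}_{\tau_h}]/\Ex_x[\tau_h]$, and uses the blow-up of $\Ex_x[l^{x_i}_{\tau_h}]/\Ex_x[\tau_h]$ to force the derivative jump to vanish. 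You instead construct a $\ccc^1$ Carath\'eodory solution of the ODE with merely bounded coefficient $\psi$ and identify it with $J_\tau$ by the verification (Dynkin-formula) argument; in effect you upgrade the uniqueness computation sketched in the paper's introduction into an existence proof. This is more elementary — no weak-generator or subprocess machinery — and it simultaneously produces the analytic representation of $J_\tau$; the paper's operator-theoretic route avoids boundary-value ODE theory and localizes all work at the single point $x_i$. Your identification $\hat u=u$, $\hat v=v$ is not circular, since it passes from the constructed $\ccc^1$ function to the probabilistic functional without using any regularity of $u,v$. Two details are asserted without justification and should be supplied: first, solvability of the two-point boundary value problem, i.e.\ that the homogeneous Dirichlet problem on $[x_i-\delta,x_i+\delta]$ has only the trivial $W^{2,\infty}$ solution, so that $c_1,c_2$ and the particular solution $\hat v$ with vanishing boundary data exist; this follows from the weak maximum principle because the zeroth-order coefficient $-(r+\psi)$ is nonpositive. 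Second, that the stopped stochastic integral in the Dynkin step is a true martingale so that optional sampling applies; this is immediate here since $\hat u'$, $\hat v'$ and $\sigma$ are bounded on the compact interval and $\Ex_x[\tau^*]\le\Ex_x[\tau^{(a,b)}]<\infty$. With these points filled in, your argument for \eqref{diff} is a valid alternative to the paper's proof.
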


\begin{remark*}
    %By the occupation density formula, cf.\ \cite[p.\ 104]{RogersWilliamsVol2} the assumption $A_t = \int_0^t \psi(X_s) ds$ for all $t\in[0,\tau^{(a,b)})$ corresponds to the choice of $\lambda\vert_{(a,b)}= \frac{\psi}{\sigma}dx$.
    
    In fact our main contribution is \eqref{diff}. The other two statements are mostly for completeness, with \eqref{cont} being less involved on our part and \eqref{diff2} a direct consequence of a result from \cite{dynkin_markov2}. Continuity of the functional $J$ as well as related functionals including those treated in Lemma \ref{lemma:cont} has been studied extensively, e.g.\ by \cite{dynkin_markov, dynkin_markov2, ito1974diffusion}. Assembling suitable results from the various sources already leads us most of the way towards \eqref{cont}.
    
    The proof of \eqref{diff} consists of two major parts. The first one is to show that $J_\tau$ is in the domain of the characteristic operator of $X$ (defined according to \cite{dynkin_markov}). The second part is to put this together with \eqref{cont} and \eqref{diff2} to prove the claim. 
    The first part follows a line of arguments from \cite{dynkin_markov, dynkin_markov2}. Lemma \ref{lemma:thm9.7}, Lemma \ref{lemma:thm13.11} and Lemma \ref{lemma:thm13.12} are adaptations of theorems from \cite{dynkin_markov, dynkin_markov2} that accommodate for our deviating general assumptions, in particular that we allow discontinuity of $\psi$. 
\end{remark*}

In the following we use the terms infinitesimal operator, weak infinitesimal operator, characteristic operator, resolvent, potential, standard process, continuous homogeneous multiplicative functional, (standardized) $\alpha$-subprocess, canonical diffusion and weak convergence according to the definitions in \cite{dynkin_markov}. We merely note that for open $D\subset I$ this means weak convergence of a sequence $(f_n)_{n\in \N}$ in $\B(D)$ to some $f\in \B(D)$ is defined via the property $\int f_n d\mu \to \int fd\mu$ for all finite measures $\mu$ on $D$.
We denote the domain of a (weak) infinitesimal operator $A$ by $\mathcal{D}_A$. Similarly, for the characteristic operator $\mathfrak A$ of a Markov process with values in $J \subset I$ we denote the set of all measurable functions such that $\mathfrak A f(x)$ exists for a fixes $x\in J$ by $\mathcal{D}_{\mathfrak A}(x)$. Moreover, the domain of $\mathfrak A$, i.e.\ the set of all measurable functions such that $\mathfrak A f(y)$ exists for all $y\in J$, is denoted by $\mathcal{D}_{\mathfrak A}$. We will also use the notation for Markov processes from \cite[Subsection 3.1]{dynkin_markov}. There, a Markov process is denoted as a quadruple  $Y=(Y_t, \zeta, \mathcal{M}_t,Q_x)=((Y_t)_{t \in [0,\infty)}, \zeta, (\mathcal{M}_t)_{t \in [0,\infty)},(Q_x)_{x\in \mathcal I})$ consisting of paths $(Y_t)_{t \in [0,\infty)}$, lifetime $\zeta$, filtration at lifetime $(\mathcal{M}_t)_{t \in [0,\infty)}$, i.e. $\mathcal{M}_t$ is a $\sigma$-algebra on $\{\omega \in \Omega: \zeta(\omega)>t\}$ with $A\in \mathcal{M}_s$ implying $A\cap \{\zeta >s'\}\in \mathcal{M}_{s'}$ for all $s\le s'$ and transition probabilities $Q_x$, $x\in \mathcal I$ with $\mathcal I$ denoting the state space of $Y$. In particular, the process $X$ from Section \ref{sec:model} is a standard process which reads $X=(X_t,\infty, \f_t,\PR_x)$.

The next lemma is concerned with the continuity of two expectations which are closely related to $J_\tau$ from Theorem \ref{thm:main}. It will not only lead us most of the way towards the proof of Theorem \ref{thm:main} \eqref{cont} but also be used to show that Lemma \ref{lemma:thm9.7} can be applied in the proofs of Lemma \ref{lemma:thm13.11} and Lemma \ref{lemma:thm13.12}. 

\begin{lemma}\label{lemma:cont}
Let $\tau^{D,\lambda}\in \mathcal{Z}$, $A:=A^{D,\lambda}$ the corresponding additive functional given by \eqref{func}, $\tilde \lambda \in \rm(D)$, $\tilde A:= A^{D,\tilde \lambda}$ the functional corresponding to $D,\lambda$ given by \eqref{func}, $J\subset D$ an open interval, $\tau:=\tau^J$ and $h:I\to \R$ a bounded measurable function. Additionally, we set $\tilde A_{t-}:= \lim_{s\nearrow t} \tilde A_s$ with convention $\tilde A_{0-}:=0$.
\begin{enumerate}[(i)]
    \item \label{lemma:cont1} If $\tau<\infty$ $\PR_x$-a.s.\ for all $x\in J$, then the function $f:I \to \R$,
\begin{align*}
    x\mapsto \Ex_x[e^{-A_\tau}h(X_\tau)]
\end{align*} is continuous on the closure $\overline{J}$ of $J$ (in $I$).
\item \label{lemma:cont2} If $\Ex_x[\tau]<\infty$ for all $x\in J$, $\mu\vert_J$, $\sigma\vert_J$ are bounded and $\tilde \lambda (J)<\infty$, then the function $F: I \to \R$,
\begin{align*}
    x\mapsto \Ex_x\left[ \int_0^\tau e^{-A_t} h(X_t) d\tilde A_t \right]
\end{align*} is continuous on $I$. We use the convention $\int_0^\tau:=\int_{[0,\tau)}$.
\end{enumerate}
\end{lemma}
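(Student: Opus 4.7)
My plan is to prove both parts by a unified coupling strategy for the diffusion, treating the interior of $J$ and its boundary separately. For part (i), I would realize all diffusions $X^x$, $x\in I$, as strong solutions of \eqref{eq:dynamics} driven by a single Brownian motion $W$; the Lipschitz continuity of $\mu,\sigma$ then makes $x\mapsto X^x_\cdot$ a.s.\ continuous in the locally uniform topology. Writing $J=(\alpha,\beta)$ and $\tau_n$ for the exit time from $J$ of $X^{x_n}$, continuity of $f$ at an interior $x_0\in J$ reduces, along $x_n\to x_0$ in $J$, to three a.s.\ convergences: $\tau_n\to\tau_0$ (from the coupling, $\tau_0<\infty$ $\PR_{x_0}$-a.s., and regularity of $X$); $A_{\tau_n}\to A_{\tau_0}$ (from joint continuity of $(y,t)\mapsto l^y_t$ and the Radon property of $\lambda$); and a reduction of the $h(X_{\tau_n})$ term to the classical continuity of the exit-side probabilities of a linear diffusion, using that $X_{\tau_n}$ lies in the at most two-point set $\{\alpha,\beta\}\cap I$. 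Bounded convergence then closes the argument.

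For continuity of $f$ at a boundary point $x_0\in\overline J\setminus J\subset\{\alpha,\beta\}\cap I$ one has $f(x_0)=h(x_0)$, and regularity of $X$ forces $\tau_n\to 0$ in probability and $\PR_{x_n}(X_{\tau_n}=x_0)\to 1$ for $x_n\to x_0$ from within $J$, so that $A_{\tau_n}\to 0$ and $e^{-A_{\tau_n}}h(X_{\tau_n})\to h(x_0)$ in probability; boundedness of $h$ finishes part (i). For part (ii), I would reuse the coupling and handle the random Stieltjes measure $d\tilde A_t$ via its local-time representation $\tilde A_t=\int_D l^y_t\,\tilde\lambda(dy)$ (valid for $t<\tau^D$), which by Fubini yields
\begin{align*}
F(x)=\int_D \Ex_x\!\left[\int_0^\tau e^{-A_t} h(X_t)\,d_t l^y_t\right]\tilde\lambda(dy).
\end{align*}
Points $x\in I\setminus J$ are handled trivially, since $\tau=0$ there gives $F=0$. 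Setting $G^J(x,y):=\Ex_x[l^y_\tau]$, the hypotheses that $\mu\vert_J,\sigma\vert_J$ are bounded and $\Ex_x[\tau]<\infty$ render $G^J$ finite and jointly continuous on $\overline J\times\overline J$ with $G^J(x,y)\to 0$ as $x\to\partial J$. For $x_0\in\overline J\setminus J$, the estimate $|F(x)|\le \sup_I|h|\cdot\int_J G^J(x,y)\,\tilde\lambda(dy)$ combined with $\tilde\lambda(J)<\infty$ yields $F(x)\to 0$ by dominated convergence in $y$. For $x_0$ in the interior of $J$, the coupling of part (i) together with the Fubini identity above and dominated convergence in the outer $\tilde\lambda$-integral (justified by the local boundedness of $G^J$ near $x_0$) gives $F(x_n)\to F(x_0)$.

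The main obstacle will be the passage to the limit inside the Stieltjes integral in part (ii): one must produce a locally uniform, $\tilde\lambda$-integrable majorant for $\Ex_{x_n}[\int_0^\tau e^{-A_t}h(X_t)\,d_t l^y_t]$ in $n$, so that both the Fubini step and the outer dominated-convergence step are valid. This is precisely where the extra hypotheses of (ii) --- $\mu\vert_J,\sigma\vert_J$ bounded, $\tilde\lambda(J)<\infty$, and $\Ex_x[\tau]<\infty$ --- enter: together they deliver the finiteness, joint continuity, and boundary behavior of $G^J$ needed to construct the majorant. The interplay between $A$ and $\tilde A$ is then mild, since the damping $e^{-A_t}\le 1$ lets one decouple it from the Stieltjes measure when bounding.
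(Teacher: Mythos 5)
Your strategy (a single-Brownian-motion coupling plus continuity of the flow, and a Green-function majorant for (ii)) is genuinely different from the paper's. The paper instead decomposes $f$ and $F$ at the first hitting time $\eta_y:=\inf\{t\ge0:X_t=y\}$ of the target point via the strong Markov property, reducing everything to the quantities $\Ex_x[\ind{\eta_y\le\tau}e^{-A_{\eta_y}}]$ and $\Ex_x[\ind{\eta_y>\tau}]$ (resp.\ $\Ex_x[\tilde A_{(\eta_y\wedge\tau)-}]$, which is controlled through the It\^o--Meyer formula), and then quotes classical continuity results for hitting probabilities and mean exit times of regular linear diffusions. The decisive advantage of that route is that it never requires convergence of the additive functional evaluated at the (varying) exit time; it only needs that the functional accumulated \emph{before first hitting $y$} tends to $0$ as the starting point tends to $y$.

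That is exactly where your argument has a genuine gap. In part (i) you assert $A_{\tau_n}\to A_{\tau_0}$ ``from joint continuity of $(y,t)\mapsto l^y_t$ and the Radon property of $\lambda$''. Two problems. First, the local times in $A_{\tau_n}$ are those of $X^{x_n}$, a different process for each $n$; joint continuity of the local-time field of a \emph{fixed} diffusion says nothing about continuity of $(x,y,t)\mapsto l^y_t(X^x)$ across the coupled family, and that continuity (say via Tanaka's formula and $L^2$-convergence of the stochastic integrals) would itself have to be proved. Second, part (i) imposes \emph{no} finiteness assumption on $\lambda\vert_J$: $\lambda$ is only Radon on $D$ and $\overline J$ may meet $\partial D$, so $\lambda$ may have infinite mass near $\partial J$, $A_\tau$ may be $+\infty$ with positive probability, and you have no dominating function with which to pass the pointwise limit through $\int_D(\cdot)\,\lambda(dy)$. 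In part (ii) the key inputs --- finiteness, joint continuity and $\tilde\lambda$-integrability, uniformly in $x$, of $G^J(x,y)=\Ex_x[l^y_\tau]$, and continuity in $x$ of the inner expectation $\Ex_x[\int_0^\tau e^{-A_t}\,d_tl^y_t]$ --- are stated as if classical, but the latter is essentially the assertion of (ii) for $\tilde\lambda=\delta_y$ and is not reduced to anything simpler, while the former needs an argument when $\overline J$ reaches $\partial I$ (where $\sigma$ may degenerate), since boundedness of $\mu\vert_J,\sigma\vert_J$ and $\Ex_x[\tau]<\infty$ do not obviously yield a majorant $G^J(x,y)\le C(y)$ with $C$ integrable against $\tilde\lambda$. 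The paper's proof of (ii) sidesteps both issues by writing $\tilde A_t=2\Psi(X_t)-2\Psi(x)-\int_0^t2\psi(X_s)\mu(X_s)\,ds-\int_0^t2\psi(X_s)\sigma(X_s)\,dW_s$ with $\Psi$ a bounded primitive of $\tilde\lambda$, which is precisely where the hypotheses of (ii) enter; to make your route rigorous you would need to supply comparable quantitative control.
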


\begin{proof} \eqref{lemma:cont1} Since $h$ is bounded, $f$ is clearly well defined. Set $J:= (a,b)$ for $a<b\in [-\infty,\infty]$. Note that $a,b$ are not necessarily in $I$.

For $y\in \R$ let ${\eta_y}:=\inf\{t\ge 0 : X_t = y \}$, with convention $\inf \varnothing := \infty$. The strong Markov property yields
\begin{align*}
    f(x) 
    &= \Ex_x[\ind{{\eta_y} \le \tau }e^{-(\theta_{\eta_y} \circ A_\tau + A_{\eta_y})}h(X_{\theta_{\eta_y} \circ A_\tau + A_{\eta_y}})] +  \Ex_x[\ind{{\eta_y} > \tau }e^{-A_\tau}h(X_\tau)] \\
    &= \Ex_x[ \ind{{\eta_y} \le \tau }e^{- A_{\eta_y} }\Ex_x[e^{-\theta_{\eta_y} \circ A_\tau} h(X_{\theta_{\eta_y} \circ A_\tau + A_{\eta_y}})  \vert \f_{{\eta_y}} ]]+  \Ex_x[\ind{{\eta_y} > \tau }e^{-A_\tau}h(X_\tau)] \\
    &=  \Ex_x[ \ind{{\eta_y} \le \tau }e^{- A_{\eta_y} }\Ex_{X_{\eta_y}}[e^{-A_\tau}h(X_\tau)] +\Ex_x[\ind{{\eta_y} > \tau }e^{-A_\tau}h(X_\tau)]\\
    &=  \Ex_x[ \ind{{\eta_y}\le  \tau }e^{- A_{\eta_y} }] \Ex_{y}[e^{-A_\tau}h(X_\tau)] +\Ex_x[\ind{{\eta_y} > \tau }e^{-A_\tau}h(X_\tau)]\\
    &=  \Ex_x[ \ind{{\eta_y} \le \tau }e^{- A_{\eta_y} }] f(y) +\Ex_x[\ind{{\eta_y} > \tau }e^{-A_\tau}h(X_\tau)]
\end{align*} for all $x,y\in I$. Thus
\begin{align}
    \vert f(x)- f(y)\vert &= \vert \Ex_x[ \ind{{\eta_y} \le \tau }e^{- A_{\eta_y} }] f(y) +\Ex_x[\ind{{\eta_y} > \tau }e^{-A_\tau}h(X_\tau)] - f(y)\vert \notag \\
    &\le \vert f(y) \vert (1- \Ex_x[\ind{\eta_y \le \tau} e^{-A_{\eta_y}}]) + \Big( \sup_{z\in I} \vert h(z)\vert \Big) \Ex_x[\ind{\eta_y > \tau}]  \label{eq:est1}
\end{align} for all $x,y\in I$. By symmetry it suffices to show that both summands on the right hand side of \eqref{eq:est1} go to 0, whenever $J \ni x\to y\in  \overline{J}$.

We start with the first summand. Since $X$ is a regular diffusion, $\lim_{J \ni x\to y}\PR_x(\eta_y< \eps)=1 $ for all $\eps>0$, cf.\ \cite[Section 3.3, 10c)]{ito1974diffusion}. By continuity of paths $\PR_x(\tau>0)=1$ for all $x\in J$. By construction $t\mapsto A_t$ is continuous on $[0,\tau^D)$ with $A_0=0$. Putting that together we find that $\Ex_x[\ind{\eta_y \le \tau} e^{-A_{\eta_y}}]\stackrel{J\ni x\to y}{\longrightarrow} 0$.

We treat the second summand next. If $y\in \{a,b\}$, then $\Ex_y[\ind{\eta_y>\tau}]=0$ and we are done. For $y\in (a,b)$ we have $\Ex_x[\ind{\eta_y> \tau}]=\PR_x(\eta_y >\eta_b)$ if $y\le x\le b$ and $\Ex_x[\ind{\eta_y> \tau}]=\PR_x(\eta_y >\eta_a)$ if $a\le x\le y$. By \cite[Section 4.4]{ito1974diffusion} the functions $[y,b]\ni x\mapsto \PR_x(\eta_y >\eta_b)$ and $[a,y]\ni x \mapsto \PR_x(\eta_y >\eta_a)$ are continuous with $\PR_x(\eta_x >\eta_b)=0$ and $\PR_x(\eta_x >\eta_a)=0$, which proves the claim.

\eqref{lemma:cont2} Let $\psi: J \to \R$, $x\mapsto \tilde \lambda ((a,x])$ and $\Psi:J \to \R$, $x\mapsto \int_0^x\psi(y) dy$. Note that $\tilde \lambda(J)<\infty$ implies that $\psi$ is bounded and $\Psi$ is continuous and bounded. For $x\in J$ and $t <\tau^D $ the Itô-Meyer formula, cf. \cite[Chapter 3, Theorem 70]{protter2005stochastic}, provides
\begin{align}
   \tilde A_t 
   &= \int_J l^y_t \tilde \lambda(dy) 
   = 2\bigg(\Psi(X_t) - \Psi(x) - \int_0^t \psi(X_s) dX_s \bigg)\notag\\
   &= 2\Psi(X_t) -2 \Psi(x) - \int_0^t 2\psi(X_s) \mu(X_s) ds - \int_0^t 2 \psi(X_s) \sigma(X_s) dW_s. \notag
\end{align} $\PR_x$-a.s.
By continuity of the paths of $A$ on $[0,\tau^D)$ in particular 
\begin{align}
    \tilde A_{t-}=  2\Psi(X_{t}) -2 \Psi(x) - \int_0^{t} 2\psi(X_s) \mu(X_s) ds - \int_0^{t} 2 \psi(X_s) \sigma(X_s) dW_s \label{eq:itomeyer}
\end{align} $\PR_x$-a.s.\ for all $x\in J$ and all $t\le \tau^D$. We find that
\begin{align*}
    &F(x) 
    \le \Big( \sup_{z\in I} \vert h(z) \vert \Big) \Ex_x\bigg[\int_0^\tau d \tilde A_\tau\bigg] 
    = \Big( \sup_{z\in I} \vert h(z) \vert \Big) \Ex_x[\tilde A_{\tau-}]\\
    =& \Big( \sup_{z\in I} \vert h(z) \vert \Big) \left(\Ex_x[2\Psi(X_{\tau})]-2 \Psi(x) -2\Ex_x\left[ \int_0^{\tau} 2\psi(X_s) \mu(X_s) ds \right]-\Ex_x\left[ \int_0^{\tau} 2 \psi(X_s) \sigma(X_s) dW_s\right]  \right) 
\end{align*} for all $x\in I$. Since $h,\psi, \Psi, \mu\vert_J$ and $ \sigma\vert_J$ are bounded and $\Ex_x[\tau]<\infty$ the function $F$ is well defined. 

For all $x,y\in I$ the strong Markov property yields
\begin{align}
    F(x)%&= \Ex_x\left[\ind{\eta_y \le \tau } \bigg(\int_0^{\eta_y} e^{-A_t}h(X_t)dt + \int_{\eta_y}^{\tau} e^{-A_t} h(X_t)dt \bigg)\right] + \Ex_x \left[ \ind{\eta_y > \tau }  \int_0^\tau e^{-A_t}h(X_t)dt  \right]\\
    =& \Ex_x\left[\ind{\eta_y \le \tau } \int_0^{\eta_y} e^{-A_t}h(X_t) d\tilde A_t \right] + \Ex_x\left[ \ind{\eta_y \le \tau }\Ex_x \left[ \int_{\theta_{\eta_y}\circ \eta_y}^{\theta_{\eta_y}\circ\tau} e^{-A_t}h(X_t)d\tilde A_t \bigg\vert\f_{\eta_y} \right]\right] \notag\\
    &+ \Ex_x \left[ \ind{\eta_y > \tau }  \int_0^\tau e^{-A_t}h(X_t)d\tilde A_t  \right]\notag\\
    =& \Ex_x\left[\ind{\eta_y \le \tau } \int_0^{\eta_y} e^{-A_t}h(X_t)d\tilde A_t \right] +\Ex_x\left[\ind{\eta_y \le \tau } \Ex_{X_{\eta_y}}\left[\int_0^{\eta_y} e^{-A_t}h(X_t)d\tilde A_t \right] \right]\notag\\
    &+ \Ex_x \left[ \ind{\eta_y > \tau }  \int_0^\tau e^{-A_t}h(X_t)d\tilde A_t  \right]\notag\\
    =& \Ex_x[\ind{\eta_y \le \tau }]F(y)+\Ex_x\left[\ind{\eta_y \le \tau } \int_0^{\eta_y} e^{-A_t}h(X_t)d\tilde A_t \right]  \notag\\
    &+ \Ex_x \left[ \ind{\eta_y > \tau }  \int_0^\tau e^{-A_t}h(X_t)d\tilde A_t  \right]. \label{eq:markov1}
\end{align} Applying \eqref{eq:itomeyer} to \eqref{eq:markov1} we obtain
\begin{align}
    &\vert F(x) - F(y) \vert \notag \\
    =& \bigg \vert F(y) (1 - \Ex_x[\ind{\eta_y \le \tau }]) + \Ex_x\left[\ind{\eta_y \le \tau } \int_0^{\eta_y} e^{-A_t}h(X_t)d\tilde A_t \right] + \Ex_x \left[ \ind{\eta_y > \tau }  \int_0^\tau e^{-A_t}h(X_t)d\tilde A_t  \right] \bigg \vert \notag\\
    \le& \vert F(y)\vert (1 - \Ex_x[\ind{\eta_y \le \tau }])  +  \Big(\sup_{z\in I} \vert h(z) \vert \Big) \Ex_x[\ind{\eta_y\le \tau} \tilde A_{\eta_y-}] +  \Big(\sup_{z\in I} \vert h(z) \vert \Big) \Ex_x[\ind{\eta_y>\tau}\tilde A_{\tau-}]\notag \\
    \le& \vert F(y) \vert (1- \PR_x(\eta_y\le \tau))  +  \Big(\sup_{z\in I} \vert h(z) \vert \Big) \Ex_x[\tilde A_{(\eta_y \wedge \tau)-}] +  \Big(\sup_{z\in I} \vert h(z) \vert \Big) \Ex_x[\tilde A_{(\eta_y \wedge \tau)-}] \notag\\
    =& \vert F(y) \vert (1- \PR_x(\eta_y\le \tau))  +  2\Big(\sup_{z\in I} \vert h(z) \vert \Big) (2\Ex_x[ \Psi(X_{\eta_y \wedge \tau})] -2 \Psi(x))\notag \\
    &-2\Big(\sup_{z\in I} \vert h(z) \vert \Big)\Ex_x\bigg[ \int_0^{\eta_y \wedge \tau} 2\psi(X_s) \mu(X_s) ds - \int_0^{\eta_y \wedge \tau} 2 \psi(X_s) \sigma(X_s) dW_s \bigg]\notag\\
    \le &\vert F(y) \vert (1- \PR_x(\eta_y\le \tau))  +  4\Big(\sup_{z\in I} \vert h(z) \vert \Big) \vert \Ex_x[ \Psi(X_{\eta_y \wedge \tau})] - \Psi(x) \vert\notag \\
    &+4\Big(\sup_{z\in I} \vert h(z) \vert \Big)  \Ex_x[\eta_y\wedge \tau] \Big( \sup_{z\in J} \vert \psi(z) \mu(z)\vert \Big). \label{eq:est2}
\end{align} for all $x\in J$ and all $y\in I$. For the last step note that $Y=(Y_t)_{t\in[0,\infty)}$, $Y_t:=\int_0^t \ind{s< \eta_y\wedge \tau} \psi(X_s)\sigma(X_s)dW_s$, $t\ge0$ is a true martingale. Thus, invoking $\Ex_x[\tau]<\infty$ the expectation vanishes by the optional sampling theorem.

By symmetry it suffices to show that all the summands on the right hand side of \eqref{eq:est2} go to 0, whenever $J \ni x\to y\in  \overline{J}$. The first summand has already been dealt with. 

We denote the scale function of $X$ by $s$. We have
\begin{align*}
    \Ex_x[\Psi(X_{\eta_y \wedge \tau})] 
    &=\begin{cases}
        \Psi(y) \PR_x(X_{\eta_y \wedge \tau} =y) + \Psi(a) \PR_x(X_{\eta_y \wedge \tau} =a), &\text{if }\, a\le x\le y,\\
        \Psi(y) \PR_x(X_{\eta_y \wedge \tau} =y) + \Psi(b) \PR_x(X_{\eta_y \wedge \tau} =b), &\text{if }\, y\le x\le b
    \end{cases}\\
    &= \begin{cases}
        \Psi(y) \frac{s(x)-s(a)}{s(y)-s(a)}  + \Psi(a) \frac{s(y)-s(x)}{s(y)-s(a)}, &\text{if }\, a\le x\le y\\
        \Psi(y) \frac{s(b)-s(x)}{s(b)-s(y)}  + \Psi(b) \frac{s(x)-s(z)}{s(b)-s(y)}, &\text{if }\, y\le x\le b
    \end{cases} 
\end{align*} for all $x\in\overline{J}$. $s$ is continuous, cf. \cite[Section 4.2]{ito1974diffusion}. Thus, the function $\overline{J} \ni x\mapsto \Ex_x[\Psi(X_{\eta_y \wedge \tau})] $ is continuous with $\lim_{J \ni x \to y}\Ex_x[\Psi(X_{\eta_y \wedge \tau})] =\Psi(y)$, i.e. $\lim_{J \ni x \to y}\Ex_x[\Psi(X_{\eta_y \wedge \tau})] -\Psi(x)=0$ by continuity of $\Psi$. This means the second summand in \eqref{eq:est2} vanishes for $J \ni x \to y$.

For $x\in \overline{J}$ and $y\in J$ we either have $\Ex_x[\eta_y \wedge \tau ]= \Ex_x[\eta_y\wedge \eta_a]$ or $\Ex_x[\eta_y \wedge \tau ]= \Ex_x[\eta_y\wedge \eta_b]$ depending on whether $y\le x\le b$ or $a\le x \le y$. The mappings $[y,b]\ni x \mapsto \Ex_x[\eta_y\wedge \eta_a]$ and $[a,y]\ni x \mapsto \Ex_x[\eta_y\wedge \eta_a]$ are continuous with $\Ex_y[\eta_y\wedge \eta_a]=0$ and $\Ex_y[\eta_y\wedge \eta_a]$ by \cite[Section 4.2, 20a), 20b)]{ito1974diffusion} since $X$ is a regular diffusion. Thus the second and third summand vanish for $y\to x$ which proves the claim.
\end{proof}

\begin{lemma}\label{lemma:thm9.7}
Let $Y=(Y_t)_{t\in [0,\infty)}$ be a standard process with continuous paths and values in an interval $J\subset \R$. Let $A$ be the weak infinitesimal operator of $Y$ and $\psi:J \to [0,\infty)$ a bounded measurable function with the property $\Ex_{\bigcdot}[\psi(Y_t)]\to \psi$ weakly for $t\searrow 0$. We set $\varphi:=(\varphi_t)_{t\in[0,\infty)}$,
    \begin{align*}
        \varphi_t:= \int_0^t \psi(Y_s)ds, \quad t\ge0.
    \end{align*}
    Let $\tilde A$ denote the weak infinitesimal operator of the transition function $\tilde \PR$ given by
    \begin{align*}
        \tilde \PR(t,x,\Gamma) := \Ex_x[\inda{\Gamma}(Y_t) e^{-\varphi_t}], \quad t\ge 0, x\in J, \Gamma \in \mathcal{B}(J).
    \end{align*} Now 
    \begin{align*}
        \mathcal{D}_{\tilde A}\cap \ccc^0(J)\cap \B(J) = \mathcal{D}_{A}\cap \ccc^0(J) \cap \B(J)
    \end{align*} and 
    \begin{align*}
        \tilde A f = A f - \psi f
    \end{align*} for all $f\in  \mathcal{D}_{\tilde A}\cap \ccc^0(J)\cap \B(J)$.
\end{lemma}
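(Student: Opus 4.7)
The plan is to exploit Dynkin's characterization of the weak infinitesimal operator via difference quotients of the transition semigroup and reduce everything to a single weak-limit computation. Write $T_t f(x) := \Ex_x[f(Y_t)]$ and $\tilde T_t f(x) := \Ex_x[f(Y_t) e^{-\varphi_t}]$ for the semigroups associated with $Y$ and with $\tilde\PR$. By definition, $f\in\mathcal{D}_A\cap\ccc^0(J)\cap\B(J)$ with $Af=g$ amounts to uniform boundedness in $t$ of $(T_t f-f)/t$ together with the weak convergence $(T_t f-f)/t\to g$ in $\B(J)$ as $t\searrow 0$, and analogously for $\tilde A$. From the elementary decomposition
\begin{align*}
    \frac{\tilde T_t f-f}{t}=\frac{T_t f-f}{t}-\frac{T_t f-\tilde T_t f}{t}
\end{align*}
the lemma will follow at once — domain equality by reading the identity in either direction, the formula $\tilde A f = Af-\psi f$ by taking weak limits — provided I can establish the key weak limit
\begin{align*}
    \frac{T_t f-\tilde T_t f}{t}\to\psi f\quad\text{weakly as }t\searrow 0,
\end{align*}
together with the uniform bound $\|(T_t f-\tilde T_t f)/t\|_\infty\le\|f\|_\infty\|\psi\|_\infty$ (immediate from $1-e^{-u}\le u$ applied to $u=\varphi_t\le t\|\psi\|_\infty$), for every $f\in\ccc^0(J)\cap\B(J)$.

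To establish the key weak limit I first invoke $0\le u-(1-e^{-u})\le u^2/2$ with $u=\varphi_t\le t\|\psi\|_\infty$ to obtain
\begin{align*}
    T_t f(x)-\tilde T_t f(x)=\Ex_x[f(Y_t)\varphi_t]+r_t(x),\qquad \|r_t\|_\infty\le\tfrac{1}{2}\|f\|_\infty\|\psi\|_\infty^2\,t^2,
\end{align*}
so after dividing by $t$ the remainder vanishes uniformly and the task reduces to showing $\tfrac{1}{t}\Ex_\cdot[f(Y_t)\varphi_t]\to\psi f$ weakly. Splitting $f(Y_t)=f(x)+(f(Y_t)-f(x))$ under $\PR_x$, the contribution of the increment is bounded in absolute value by $\|\psi\|_\infty\Ex_x[|f(Y_t)-f(x)|]$, which tends to $0$ pointwise in $x$ by continuity of $f$ and of the paths of $Y$, and is dominated by $2\|f\|_\infty\|\psi\|_\infty$; so dominated convergence kills this term after integration against any finite measure. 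For the leading term, write $\tfrac{f(x)}{t}\Ex_x[\varphi_t]=\tfrac{f(x)}{t}\int_0^t\Ex_x[\psi(Y_s)]\,ds$, integrate against an arbitrary finite measure $\mu$, apply Fubini, and use the hypothesis $\Ex_\cdot[\psi(Y_s)]\to\psi$ weakly — applied to the finite signed measure $f\,d\mu$ via its Hahn decomposition — to deduce $\int f(x)\Ex_x[\psi(Y_s)]\,\mu(dx)\to\int f\psi\,d\mu$ for each fixed $s$; a bounded Cesàro average in $s$ then delivers the desired weak convergence.

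The main technical obstacle is exactly this transfer and averaging step: one has to upgrade the assumed weak convergence from positive test measures to signed measures of the form $f\,d\mu$ (immediate by linearity, since $f\in\B(J)$) and then justify passing it through the time average $\tfrac{1}{t}\int_0^t\cdots\,ds$, which is legitimate because the inner integrand is uniformly bounded in $s$ and converges pointwise in $s$. Once the key weak limit is in hand, the decomposition displayed in the first paragraph yields both inclusions of the domains inside $\ccc^0(J)\cap\B(J)$ and the claimed identity $\tilde A f = Af-\psi f$ by reading it in the two opposite directions.
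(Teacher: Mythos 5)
Your argument is correct in substance but takes a genuinely different route from the paper. The paper never touches the difference quotients of the semigroups: it invokes the weak form of Dynkin's Theorem 9.6, which supplies the operator identity $c\id - \tilde A = (c\id - A)(\id + S_c)$ with $S_c f = R_c(f\psi)$, reads off from it that $\mathcal{D}_{\tilde A} = \{f\in\B(J): f + R_c(f\psi)\in\mathcal{D}_A\}$, and then reduces everything to the single weak limit $\Ex_{\bigcdot}[f(Y_t)\psi(Y_t)]\to f\psi$ as $t\searrow 0$ (needed to place $R_c(f\psi)$ in $\mathcal{D}_A$ via Dynkin's Theorem 1.7); that limit is verified by exactly the splitting $f(Y_t)=f(x)+(f(Y_t)-f(x))$ you also use. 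You instead reprove the perturbation formula from scratch at the level of the semigroups, via $(\tilde T_t f - f)/t = (T_t f - f)/t - (T_t f - \tilde T_t f)/t$ and the expansion $1-e^{-\varphi_t}=\varphi_t+O(t^2)$. Your treatment of the key weak limit is sound: the second-order remainder estimate, the transfer of the hypothesis on $\psi$ to the signed test measures $f\,d\mu$ via the Jordan decomposition, and the bounded Ces\`aro averaging in $s$ are all legitimate. What the paper's route buys is that the bookkeeping hidden in the definition of the weak infinitesimal operator is outsourced to Dynkin's theorems; what yours buys is a self-contained, more elementary argument that avoids the resolvent calculus entirely.

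One point you must patch before the argument is complete: in Dynkin's definition of the weak infinitesimal operator (which this paper adopts), $f\in\mathcal{D}_A$ with $Af=g$ requires, besides the weak convergence of $(T_tf-f)/t$ to $g$, the secondary condition that $T_t g\to g$ weakly as $t\searrow 0$; your reduction of domain membership to ``uniform boundedness plus weak convergence of the difference quotient'' omits this. The omission is harmless but needs to be addressed in both directions of the domain equality: since $\|T_t h-\tilde T_t h\|_\infty\le \|h\|_\infty\|\psi\|_\infty\, t\to 0$, the secondary condition for $A$ applied to $Af$ and for $\tilde A$ applied to $Af-\psi f$ are equivalent once one knows $T_t(\psi f)\to\psi f$ weakly --- and that is precisely the same splitting argument you already carry out for $t^{-1}\Ex_{\bigcdot}[f(Y_t)\varphi_t]$, applied to $\Ex_{\bigcdot}[\psi(Y_t)f(Y_t)]$ instead. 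With that one additional line, both inclusions and the identity $\tilde A f = Af-\psi f$ follow as you describe.
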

\begin{remark*}
    This lemma is an adaptation of \cite[Theorem 9.7]{dynkin_markov} in the weak formulation outlined in subsequent remark, cf.\ \cite[p.\ 299]{dynkin_markov}. The original weak formulation of the theorem yields $\mathcal{D}_{\Tilde{A}}=\mathcal{D}_A$ provided that the additional condition (9.62) from \cite[p.\ 299]{dynkin_markov} is satisfied.   
\end{remark*}
\begin{proof}
    Clearly $\sup_{x\in J} \Ex_x[\varphi_t] \to 0$ for $t \searrow 0$. Thus the weak formulation of \cite[Theorem 9.6]{dynkin_markov} from the subsequent Remark 1, \cite[p.\ 297]{dynkin_markov} yields
    \begin{align}
        cE - \tilde A = (c\id - A) (\id+ S_c) \label{eq:thm9.6}
    \end{align} for some $c >0$, where $\id$ denotes the identity operator and $S_c$ maps a measurable function $f:J\to \R$ to the function $S_c f(x):= \Ex_x[\int_0^\infty e^{-ct}f(Y_t) \psi(Y_t) dt]$, $x\in J$, whenever the right hand side is well defined and finite.  We denote the resolvent operator of the semigroup generated by (the transition function of) $Y$ by $R_c$, $c>0$, cf.\ \cite[Subsection 5.1]{dynkin_markov}. Now by definition 
    \begin{align*}
        S_cf(x)=R_c(f\psi)(x), \quad x \in J.
    \end{align*} Together with \eqref{eq:thm9.6} this implies 
    \begin{align}
        \mathcal{D}_{\tilde A}= \{ f \in \B(J): f + R_c(f\psi)\in \mathcal{D}_A\}.\label{eq:9.61}
    \end{align}
    
    Let $f\ccc^0(J)\cap \B(J)$. By \cite[Theorem 1.7]{dynkin_markov} $R_c(f\psi)\in \mathcal{D}_A$ if $\Ex_{\bigcdot}[f(Y_t)\psi(Y_t)]\to f\psi$ weakly for $t\searrow 0$. However, for each $x\in J$
    \begin{align*}
        &\vert \Ex_{x}[f(Y_t)\psi(Y_t)]- f(x)\psi(x) \vert 
        \le \vert \Ex_{x}[(f(Y_t)- f(x))\psi(Y_t)]  \vert+ \vert f(x)(\Ex_{x}[\psi(Y_t)] - \psi(x))\vert\\
        \le& \Big(\sup_{y\in I} \psi(y) \Big) \Ex_{x}[(f(Y_t)- f(x))] + \vert f(x)(\Ex_{x}[\psi(Y_t)] - \psi(x))\vert \stackrel{t\searrow 0}{\longrightarrow}0 
    \end{align*} by dominated convergence using $f\in \ccc^0(J)\cap \B(J)$ and due to the assumption on $\psi$. As $f$ and $\psi$ are both bounded, dominated convergence also yields $\Ex_{\bigcdot}[f(Y_t)\psi(Y_t)]\to f\psi$ weakly for $t\searrow 0$ as desired. 
    
    If $f\in \mathcal{D}_{\tilde A}\cap \ccc^0(J)\cap \B(J)$, then $f + R_c(f\psi)\in \mathcal{D}_A$ by \eqref{eq:9.61}. $R_c(f\psi)\in \mathcal{D}_A$ has already been shown and so by linearity $f= (f + R_c(f\psi)) - R_c(f\psi)\in \mathcal{D}_A$. 

    If $f\in \mathcal{D}_{A}\cap \ccc^0(J)\cap \B(J)$, then since $R_c(f\psi)\in \mathcal{D}_A$ also $f+R_c(f\psi)\in \mathcal{D}_A$. By \eqref{eq:9.61} this implies $f\in \mathcal{D}_{\tilde A}$.
    
    To prove the final claim let $f\in \mathcal{D}_{\tilde A}\cap \ccc^0(J)\cap \B(J)$. Once again by \cite[Theorem 9.6]{dynkin_markov}
    \begin{align*}
        (c\id - \tilde A) f = (c\id - A) (\id+ S_c)f = (c \id - A)f + (c\id - A) R_c(\psi f) = (c\id - A) f + \psi f,
    \end{align*} i.e.\ $\tilde A f = A f - \psi f$.
\end{proof}

\begin{lemma}\label{lemma:thm13.11}
Let $\mathfrak A$ denote the characteristic operator of $X$. Moreover, let $J\subset I$ be an open interval with $\tau^J<\infty$ $\PR_x$-a.s.\ for all $x\in J$ and $\psi:J \to [0,\infty)$ a bounded measurable function. Assume that $\Ex_{\bigcdot}[\psi(X_{t\wedge \tau^J})]\vert_J\to \psi$ weakly for $t\searrow 0$. We extend $\psi$ to $I$ by 0 outside of $J$, set $\tau:=\tau^J$ and
    \begin{align*}
        f(x):=\Ex_x\left[e^{-\int_0^{\tau}\psi(X_s)ds} g(X_{\tau}) \right]
    \end{align*} for all $x\in I$. Then $f$ is continuous with
    \begin{align*}
        \mathfrak A f(x) - \psi(x)  f (x) = 0
    \end{align*} and in particular $f\in \mathcal{D}_{\mathfrak A}(y)$ for all $y\in J$.
\end{lemma}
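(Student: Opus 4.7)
The plan is to establish continuity of $f$ first, then derive the characteristic-operator identity by reducing to the killed subprocess treated in Lemma \ref{lemma:thm9.7}.

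For continuity, I would invoke Lemma \ref{lemma:cont}\eqref{lemma:cont1}. The occupation time formula rewrites $t\mapsto \int_0^t\psi(X_s)\,ds$ as an additive functional $A^{J,\tilde\lambda}_t$ for a Radon measure $\tilde\lambda$ on $J$ with bounded density. Under the standing assumption $\tau^J<\infty$ $\PR_x$-a.s.\ on $J$ and boundedness of $g(X_\tau)$, Lemma \ref{lemma:cont}\eqref{lemma:cont1} applied with $h=g$ yields $f\in \ccc^0(\overline{J})$; since $f=g$ on $I\setminus\overline{J}$ this extends to continuity on $I$.

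For the operator identity, fix $x\in J$ and consider the killed subprocess $\tilde X$ with transition function
\[
\tilde\PR(t,x,\Gamma)=\Ex_x\!\left[\inda{\Gamma}(X_t)\,e^{-\int_0^t\psi(X_s)\,ds}\right],
\]
which is exactly the object featured in Lemma \ref{lemma:thm9.7}. For any open $U$ with $x\in U$ and $\overline{U}\subset J$, the strong Markov property of $X$ at $\tau^U$ decomposes $\tau=\tau^U+\theta_{\tau^U}\circ\tau$ and yields the fixed-point identity
\[
f(x) = \Ex_x\!\left[e^{-\int_0^{\tau^U}\psi(X_s)\,ds}\,f(X_{\tau^U})\right],
\]
which is precisely the statement that $f$ is $\tilde X$-harmonic on $U$. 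Invoking Dynkin's harmonicity theorem (the analogue of \cite[Theorem 13.11]{dynkin_markov2} for $\tilde X$), along with the continuity of $f$ just obtained and the standardness of $\tilde X$ as a killed subprocess of a standard process, places $f\in \mathcal D_{\tilde{\mathfrak A}}(x)$ with $\tilde{\mathfrak A}f(x)=0$, where $\tilde{\mathfrak A}$ is the characteristic operator of $\tilde X$. Combining this with the operator identity $\tilde{\mathfrak A}f=\mathfrak A f-\psi f$ delivered by Lemma \ref{lemma:thm9.7} (lifted from the weak infinitesimal to the characteristic operator on bounded continuous functions in the common domain) yields
\[
\mathfrak A f(x) - \psi(x)f(x) = \tilde{\mathfrak A}f(x) = 0,
\]
as required.

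The main obstacle is the bookkeeping needed to transfer Lemma \ref{lemma:thm9.7}'s weak-infinitesimal identity to the characteristic operators in the present setting, where $\psi$ is only bounded measurable with $\Ex_{\bigcdot}[\psi(X_{t\wedge\tau^J})]\vert_J\to \psi$ weakly rather than pointwise. Verifying precisely this weak-continuity hypothesis is what opens the door to Lemma \ref{lemma:thm9.7}, and a standard localization (restricting to exit times from subneighborhoods) promotes the weak infinitesimal identity to the pointwise characteristic-operator statement at each $x\in J$. This is exactly the adaptation that the Remark following Theorem \ref{thm:main} identifies as the key modification needed to accommodate discontinuous $\psi$, and where the present argument departs from the original Dynkin proof.
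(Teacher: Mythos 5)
Your overall architecture matches the paper's: continuity via Lemma \ref{lemma:cont}\eqref{lemma:cont1}, then a reduction to the subprocess killed at rate $\psi$, and an appeal to Lemma \ref{lemma:thm9.7} to relate the two generators. The gap is in the middle step. Your harmonicity identity $f(x)=\Ex_x[e^{-\int_0^{\tau^U}\psi(X_s)ds}f(X_{\tau^U})]$ is correct and does place $f$ in $\mathcal{D}_{\tilde{\mathfrak A}}(x)$ with $\tilde{\mathfrak A}f(x)=0$ for the \emph{characteristic} operator of the killed process. But Lemma \ref{lemma:thm9.7} is a statement about \emph{weak infinitesimal} operators: it yields $\tilde A G=\hat A G-\psi G$ only for $G\in\mathcal{D}_{\tilde A}\cap\ccc^0\cap\B$, and you have not shown $f\in\mathcal{D}_{\tilde A}$. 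Since for standard processes the characteristic operator is an \emph{extension} of the weak infinitesimal operator and not the reverse, membership in $\mathcal{D}_{\tilde{\mathfrak A}}(x)$ does not give membership in $\mathcal{D}_{\tilde A}$, so the ``lifting'' you invoke runs in the wrong direction. Proving the perturbation identity directly at the characteristic-operator level would instead require the pointwise ratio limit $\Ex_x[\int_0^{\tau^U}\psi(X_t)\,dt]/\Ex_x[\tau^U]\to\psi(x)$ as $U\downarrow x$ (otherwise the computation is circular, because the existence of $\lim_{U}(\Ex_x[f(X_{\tau^U})]-f(x))/\Ex_x[\tau^U]$ is exactly what is to be proved). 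That ratio limit does not follow from the hypothesis $\Ex_{\bigcdot}[\psi(X_{t\wedge \tau^J})]\vert_J\to\psi$ weakly, and it is precisely the delicate point for the discontinuous $\psi$ arising in the application.

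The paper closes this gap differently: it first stops the process at $\tau^J$ (setting $\hat X_t=X_{t\wedge\tau}$) and only then kills it, so that $\tilde \Ex_x[F(\tilde X_t)]\to f(x)$ as $t\to\infty$ for any bounded measurable $F$ agreeing with $g$ on $\partial J$; by the semigroup property this forces $\tilde \Ex_x[f(\tilde X_t)]=f(x)$ for all $t$, hence $f\in\mathcal{D}_{\tilde A}$ with $\tilde A f=0$ in the weak sense. Lemma \ref{lemma:thm9.7} then applies legitimately to give $\hat A f=\psi f$, and the passage to the characteristic operator is made only at the very end via \cite[Lemma 5.6]{dynkin_markov} together with locality of $\mathfrak A$ on the open set $J$. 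If you replace your exit-time harmonicity step by this semigroup-invariance argument (or otherwise establish $f\in\mathcal{D}_{\tilde A}$ directly), your proof goes through.
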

\begin{remark*}
    The proof of this result is analogous to the proof of \cite[Theorem 13.11]{dynkin_markov2} up to the part where we apply Lemma \ref{lemma:thm9.7} instead of its counterpart \cite[Theorem 9.7]{dynkin_markov}. %In \cite[Theorem 13.11]{dynkin_markov2} $f$ is assumed to be continuous, but at least to us, it is unclear how this assumption contributes to the proof. Additionally, in the setting of regular diffusions we have in mind, $f$ is continuous by nature anyway.
\end{remark*}
\begin{proof} Note that $f$ is continuous by Lemma \ref{lemma:cont} \eqref{lemma:cont1}. 
    
    Let $\hat X = (\hat X_t)_{t\in[0,\infty)}$, be given by $\hat X_t(\omega) := X_{t\wedge\tau(\omega)}(\omega)$, $t\in [0,\infty)$. By \cite[Theorem 10.3]{dynkin_markov} $\hat X=(\hat X_t, \infty, \f_t, \PR_x)$ is a standard process. 
    Let $\alpha = (\alpha_t)_{t \in [0 ,\infty)}$ be given by
    \begin{align*}
        \alpha_t := e^{-\int_0^t \psi(\hat X) ds}, \quad t \ge 0.
    \end{align*} Clearly $\alpha$ is a continuous homogeneous multiplicative functional of $\hat X$. We let $\tilde X = (\tilde X_t, \tilde \zeta, \tilde \f_t, \tilde \PR_x)$ denote the standardized $\alpha$-subprocess of $\hat X$, cf. \cite[Subsection 10.10, Subsection 10.12, Subsection 10.17]{dynkin_markov}. In particular this means that $\tilde X$ is a process on $(\Omega \times [0,\infty], \tilde \f \otimes \mathcal{B}([0,\infty]))$ with $\tilde \f$ denoting the $\sigma$-algebra generated by $\tilde \f_t, t\ge0$, $\tilde \zeta (\omega,t):=  t$, $\tilde X_t :=  \hat X_t(\omega), t < \zeta(\omega,t)$ and $\tilde \PR_x$ are such that 
    \begin{align}
        \tilde \PR_x(\tilde X_t \in \Gamma)= \Ex_x[\inda{\Gamma}(\hat X_t)\alpha_t] \label{eq:10.18}
    \end{align} for all $\Gamma\in\mathcal{B}(I)$. Moreover, $\tilde X$ is a standard process, cf. \cite[Theorem 10.7]{dynkin_markov}. 
    Now let $F:I\to \R$ be bounded and measurable and $x\in J$. We denote the expectation corresponding to $\tilde \PR_x$ by $\tilde \Ex_x$. Now \eqref{eq:10.18} extends to 
    \begin{align*}
        \tilde \Ex_x[F(\tilde X_t)] = \Ex_x[F(\hat X_t) \alpha_t].
    \end{align*} Using $\hat X_t= X_{t\wedge \tau}$ and $\alpha_t= \alpha_{t\wedge \tau}$ for all $t\ge0$ since $\psi=0$ outside of $J$, we find that
    \begin{align*}
        \tilde \Ex_x[F(\tilde X_t)] &= \Ex_x[\ind{t<\tau}F(\hat X_t) \alpha_t]+ \Ex_x[\ind{t\ge \tau}F(X_\tau) \alpha_\tau].
    \end{align*} Since we assumed $\tau< \infty$ $\PR_x$-a.s.\ and that $F$ is bounded, dominated convergence yields
    \begin{align}
        \lim_{t\to \infty} \tilde \Ex_x[F(\tilde X_t)] = \Ex_x[F(X_\tau) \alpha_\tau]. \label{eq:13.53}
    \end{align}
    From now on let $F$ be bounded and measurable such that $F=g$ on the boundary $\partial J\subset I$ of $J$. By definition of $f$, followed by dominated convergence applied to \eqref{eq:13.53} we infer
    \begin{align*}
        f(x)= \Ex_x[g(X_\tau) \alpha_\tau ] = \lim_{t\to \infty} \tilde \Ex_x[F(\tilde X_t)].
    \end{align*} Thus, for the weak infinitesimal generator $\tilde A$ of $\tilde X$ we have $\tilde Af=0$ and in particular $f\in \mathcal{D}_{\tilde A}$. 
    
    We denote the weak infinitesimal operator of $\hat X$ by $\hat A$. Note that $\hat X$ is a standard process with continuous paths and since $\Ex_{y}[\psi(X_{t\wedge\tau})] = \psi(y)$ for all $y\in I\setminus (a,b)$ we still have $\Ex_{\bigcdot}[\psi(X_{t\wedge\tau})] \to \psi$ weakly for $t\searrow 0$ despite extending $\psi$. Due to \eqref{eq:10.18}, Lemma \ref{lemma:thm9.7} can be applied to the process $\hat X$, $\tilde \PR_x$ and the extension of $\psi$ to provide $\tilde A G=\hat A G - \psi G$ for all $G \in \mathcal{D}_{\tilde A}\cap \ccc^0(I)\cap \B(I)=\mathcal{D}_{\hat A}\cap \ccc^0(I)\cap \B(I)$. In particular since $f$ is continuous,
    \begin{align}
        \hat A f - \psi f= \tilde A f =0\label{eq:13.54}
    \end{align} and $f\in \mathcal{D}_{\hat A}$. If we denote the characteristic operator of $\hat X$ by $\hat{\mathfrak A}$, \cite[Lemma 5.6]{dynkin_markov} provides $f\in \hat{\mathfrak A}(x)$ and $\hat{\mathfrak A} f(x)= \psi(x) f(x)$ due to \eqref{eq:13.54}. As $J$ is open, by definition of the characteristic operator $\mathcal{D}_{\hat{\mathfrak A}}(x) = \mathcal{D}_{\mathfrak A}(x)$ and $\hat{\mathfrak A} f(x) = \mathfrak A f(x)$. In particular $f\in \mathcal{D}_{\mathfrak A}$ and $\mathfrak A f(x)-\psi(x)f(x)=0$, which proves the claim.
\end{proof}

\begin{lemma}\label{lemma:thm13.12}
    Let $\mathfrak A$ denote the characteristic operator of $X$. Moreover, let $J\subset I$ be an open interval with $\Ex_x[\tau^J]<\infty$ for all $x\in J$, $\psi:J \to [0,\infty)$ bounded and measurable and $h:J\to \R$ bounded and measurable. Assume that $\Ex_{\bigcdot}[\psi(X_{t\wedge \tau^J})]\vert_J\to \psi$ weakly for $t\searrow 0$ and $\Ex_{\bigcdot}[h(X_{t\wedge \tau^J})e^{-\int_0^{t\wedge\tau} \psi(X_s)ds }]\vert_J\to h$ weakly for $t\searrow 0$ respectively. We extend the functions $\psi, h$ to $I$ by 0 outside of $J$, set $\tau:=\tau^J$ and
    \begin{align*}
        F(x):= \Ex_x\left[\int_0^\tau e^{-\int_0^s \psi(X_s) ds} h(X_s) dt \right]
    \end{align*} for all $x\in I$. Then $F$ is continuous with
    \begin{align*}
        \mathfrak A F(x) - \psi(x) F(x) = - h (x)
    \end{align*} and in particular $F\in\mathcal{D}_{\mathfrak A}(y)$ for all $x\in J$.
\end{lemma}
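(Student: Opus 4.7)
The strategy mirrors the proof of Lemma~\ref{lemma:thm13.11}, replacing the constant boundary payoff by an integrated running payoff and, correspondingly, the weak--generator identity $\tilde A f=0$ by a potential identity $\tilde A F=-h$.

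First, I would establish continuity of $F$ on $I$. Although Lemma~\ref{lemma:cont}\eqref{lemma:cont2} is tailored to local--time integrators and does not directly cover $dt$, the underlying strong Markov argument carries over: splitting $\int_0^\tau$ at $\eta_y\wedge\tau$ with $\eta_y:=\inf\{t\ge 0:X_t=y\}$ gives
\begin{align*}
F(x)=\Ex_x\!\bigg[\int_0^{\eta_y\wedge\tau}e^{-\int_0^t\psi(X_s)ds}h(X_t)\,dt\bigg]+\Ex_x\!\big[\ind{\eta_y\le\tau}e^{-\int_0^{\eta_y}\psi(X_s)ds}\big]\,F(y),
\end{align*}
leading to the estimate $\vert F(x)-F(y)\vert \le \vert F(y)\vert (1-\Ex_x[\ind{\eta_y\le\tau}e^{-\int_0^{\eta_y}\psi(X_s)ds}])+(\sup_{z\in I}\vert h(z)\vert )\Ex_x[\eta_y\wedge\tau]$, each term of which vanishes as $J\ni x\to y\in\overline J$ by the regularity of $X$ and the standard continuity of $x\mapsto\Ex_x[\eta_y\wedge\tau]$ invoked in Lemma~\ref{lemma:cont}. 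For $y\in\partial J$ one also uses that $F\equiv 0$ on $I\setminus J$ (since $\tau=0$ there) together with $\Ex_x[\tau]\to 0$ as $x\to\partial J$.

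With continuity in hand, I would transcribe the construction from the proof of Lemma~\ref{lemma:thm13.11} verbatim: set $\hat X_t:=X_{t\wedge\tau}$, a continuous--path standard process by \cite[Theorem~10.3]{dynkin_markov}; define the continuous homogeneous multiplicative functional $\alpha_t:=\exp(-\int_0^t\psi(\hat X_s)ds)$ of $\hat X$; and let $\tilde X$ be the standardized $\alpha$--subprocess of $\hat X$, whose transition function $\tilde T_t$ satisfies \eqref{eq:10.18}. Because $\psi,h$ are supported in $J$ and $\hat X_t\in\partial J$ for $t\ge\tau$, the definition of $F$ rewrites as the $0$--potential of $h$ with respect to $\tilde X$,
\begin{align*}
F(x)=\Ex_x\!\bigg[\int_0^\infty h(\hat X_t)\alpha_t\,dt\bigg]=\int_0^\infty\tilde T_t h(x)\,dt,\qquad x\in I.
\end{align*}
The Markov property of $\tilde X$ applied to this representation yields the semigroup identity $\tilde T_tF=F-\int_0^t\tilde T_sh\,ds$, so that $t^{-1}(\tilde T_tF-F)\to -h$ weakly as $t\searrow 0$, provided $\tilde T_sh\to h$ weakly. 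Via \eqref{eq:10.18} and $\psi\vert_{I\setminus J}\equiv 0$, one has $\tilde T_th(x)=\Ex_x[h(X_{t\wedge\tau})e^{-\int_0^{t\wedge\tau}\psi(X_s)ds}]$, which by the second hypothesis of the lemma converges weakly to $h$; the averaging $t^{-1}\int_0^t\tilde T_sh\,ds\to h$ weakly is then automatic. Hence $F\in\mathcal D_{\tilde A}$ with $\tilde A F=-h$.

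The remainder is mechanical. Lemma~\ref{lemma:thm9.7}, applied to $\hat X$, the extension of $\psi$ to $I$ (whose weak--convergence hypothesis extends from $J$ to $I$ since $\psi\equiv 0$ on $I\setminus J$, exactly as in the proof of Lemma~\ref{lemma:thm13.11}) and the bounded continuous function $F$, gives $\tilde A F=\hat A F-\psi F$, so $\hat A F(x)-\psi(x)F(x)=-h(x)$ and $F\in\mathcal D_{\hat A}$. Passage from $\hat A$ to the characteristic operator $\hat{\mathfrak A}$ of $\hat X$ via \cite[Lemma~5.6]{dynkin_markov}, followed by the observation that on the open set $J$ the characteristic operators of $\hat X$ and $X$ coincide, completes the proof. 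The main obstacle is the potential identity $\tilde A F=-h$: unlike in Lemma~\ref{lemma:thm13.11}, where $\tilde A f=0$ drops out of a trivial long--time limit, here one must control the short--time behavior of a running--cost potential, which is precisely the role of the second (seemingly technical) weak--convergence hypothesis on $h$ in the lemma's statement.
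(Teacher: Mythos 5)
Your proposal is correct and follows essentially the same route as the paper: identify $F$ with the potential $\tilde R h$ of the standardized $\alpha$-subprocess $\tilde X$ of $\hat X_t=X_{t\wedge\tau}$, deduce $\tilde A F=-h$ from the second weak-convergence hypothesis, and transfer to $\mathfrak A$ via Lemma \ref{lemma:thm9.7} and \cite[Lemma 5.6]{dynkin_markov}. The only differences are cosmetic: the paper obtains continuity by invoking Lemma \ref{lemma:cont} \eqref{lemma:cont2} (with the $dt$-integrator realized as a local-time integral) and gets $\tilde A F=-h$ by citing \cite[Theorem 1.7$'$]{dynkin_markov}, whereas you unpack both of these citations by hand.
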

\begin{remark*}
    Once again, the proof of this lemma is mostly analogous to its counterpart \cite[Theorem 13.12]{dynkin_markov2}. The first difference is that here $\tilde \Ex_{\bigcdot}[h(\tilde X_t)] \to h$ weakly for $t\searrow 0$ is assumed directly and not inferred from continuity. The other is that, just as for Lemma \ref{lemma:thm13.11}, we rely on Lemma \ref{lemma:thm9.7} instead of its counterpart \cite[Theorem 9.7]{dynkin_markov2}. %Finally, we omit the assumptions $F$ of boundedness and continuity on $F$ as they are natural in our setting of regular diffusions and do not seem to contribute to the original proof as far as we know.
\end{remark*}

\begin{proof}
    Note that $F$ is continuous by Lemma \ref{lemma:cont} \eqref{lemma:cont2}.

    Let $\hat X=(\hat X_t, \infty, \f_t, \PR_x)$, $\alpha=(\alpha_t)_{t\in[0,\infty)}$ and $\tilde X=(\tilde X_t,\tilde \zeta, \tilde \f_t, \tilde \PR_x)$ be the processes constructed in the proof of Lemma \ref{lemma:thm13.11}, $\hat A$, $\tilde A$ their weak infinitesimal generators and $\hat{\mathfrak A}$, $\tilde{\mathfrak A}$ their characteristic operators.
    
    Let $x\in I$. Once again, we denote the expectation corresponding to $\tilde \PR_x$ by $\tilde \Ex_x$. Using the definitions of $\tilde X_t$ and $\hat X_t$, \cite[Theorem 10.5, (10.20)]{dynkin_markov} and the assumption $h=0$ outside of $J$ we obtain
    \begin{align}
        \tilde \Ex_x[h(\tilde X_t)]=\tilde \Ex_x[h( \hat X_t ) \ind{\tilde \zeta > t}] = \Ex_x[h( \hat X_t ) \alpha_t] =  \Ex_x[\ind{\tau>t} h( X_t ) \alpha_t].\label{eq:13.56}
    \end{align} for all $t\ge0$. Let $\tilde R$ denote the potential of $\tilde X$. Using the definition of the potential in the first step, \eqref{eq:13.56} in the second, Fubini's theorem in the third and the definition of $F$ is the last step we obtain
    \begin{align*}
        \tilde R h(x)&= \int_0^\infty \tilde \Ex_x[h(\tilde X_t)] dt = \int_0^\infty \Ex_x[\ind{\tau>t} h( X_t ) \alpha_t] dt = \Ex_x\left[ \int_0^\infty  \ind{\tau>t}h(X_t)\alpha_t dt\right] \\
        &=\Ex_x\left[ \int_0^\tau  h(X_t)\alpha_t dt\right]=F(x).
    \end{align*} Thus, since $x\in I$ was arbitrary, $\tilde R h = F$.

    We have $ \tilde \Ex_y[h(\tilde X_t)]= \Ex_y[h(\hat X_t)\alpha_t]=0$ for all $t\ge 0$ and all $y\in I\setminus J$ since $h=0$ outside $J$ and $\hat X_t=y $ $\PR_y$-a.s. Combined with $ \tilde \Ex_{\bigcdot}[h(\tilde X_t)]  =\Ex_{\bigcdot}[h(\hat X_t)\alpha_t]=\Ex_{\bigcdot}[h(X_{t\wedge \tau^J})e^{-\int_0^{t\wedge\tau} \psi(X_s)ds }]$ and the assumption on $h$, this implies $\tilde \Ex_{\bigcdot}[h(\tilde X_t)]\to h$ weakly for $t\searrow 0$. Now \cite[Theorem 1.7$'$]{dynkin_markov} provides $F\in \mathcal{D}_{\tilde A}$ and $\tilde A F= -h$.
    
    As mentioned in the proof of Lemma \ref{lemma:thm13.11}, $\tilde X$ is a standard process and due to \eqref{eq:10.18}, the preconditions of Lemma \ref{lemma:thm9.7} are met for $\hat X$, $\tilde \PR_x$ and $\psi$. The lemma yields $\tilde A G=\hat A G - \psi G $ for all $G\in \mathcal{D}_{\hat A}\cap \ccc^0(I)\cap \B(I)=\mathcal{D}_{\tilde A}\cap \ccc^0(I)\cap \B(I)$. In particular since $F$ is continuous
    \begin{align}
        \hat A F- \psi F = \tilde A F= -h \label{eq:13.54_2}
    \end{align} and $F\in \mathcal{D}_{\hat A}$. 
    
    \cite[Lemma 5.6]{dynkin_markov} provides $F\in \hat{\mathfrak A}(x)$ and $\hat{\mathfrak A} F(x)= \psi(x) F(x) -h(x)$ due to \eqref{eq:13.54_2}. As $J$ is open, by definition of the characteristic operator $\mathcal{D}_{\hat{\mathfrak A}}(x) = \mathcal{D}_{\mathfrak A}(x)$ and $\hat{\mathfrak A} F(x) = \mathfrak A F(x)$. In particular $F\in \mathcal{D}_{\mathfrak A}$ and $\mathfrak A F(x)-\psi(x)f(x)-h(x)=0$ which proves the claim.
\end{proof}

\begin{proof}(of Theorem \ref{thm:main})

\eqref{cont} We first calculate the conditional distribution of $\tau$ under $\PR_x$ given $\f^X_\infty$. Since $A_t$ is $\f_\infty^X$-measurable and $E$ is independent from $X$ we have
    \begin{align*}
        \PR_x(\tau_2>t \vert\f^X_\infty) = \PR_x(A_t<E\vert \f^X_\infty) = e^{-A_t}.
    \end{align*} for all $t\ge 0$. Thus $d(-e^{-A_{\bigcdot}})$ (i.e.\ the Markov kernel $\Omega\times \mathcal{B}([0,\infty])\ni (\omega, (a,b]) \mapsto e^{-A_a(\omega)}-e^{-A_b(\omega)} $) is a regular version of the conditional distribution of $\tau$ given $\f^X_\infty$. Applying a change of variables for finite variation processes, cf. \cite[p. 42]{protter2005stochastic}, and using that $A$ only jumps in $\tau^D$, we find
    \begin{align*}
        e^{-A_a(\omega)}-e^{-A_b(\omega)} = \int_{(a,b\wedge \tau^D)} e^{-A_{s-}}dA_{\bigcdot}(s)  + e^{-A_{\tau^D-}} - e^{-A_{\tau^D}}.
    \end{align*}
With the distribution of $\tau$, the fact $\tau^{(a,b)} \le \tau^D$ and the Markov property we obtain
\begin{align}
    &J_{\tau}(x) \notag\\
    =& \Ex_x[\ind{ \tau^{(a,b)} >\tau } e^{-r \tau} g(X_\tau)]+ \Ex_x[\ind{ \tau^{(a,b)} \le \tau } e^{-r (\theta_{\tau^{(a,b)}}\circ \tau + \tau^{(a,b)}) } g(X_{\theta_{\tau^{(a,b)}}\circ \tau + \tau^{(a,b)}}) ]  \notag\\
    =&\Ex_x[\Ex_x[ \ind{ \tau^{(a,b)} >\tau } e^{-r \tau} g(X_\tau) \vert \f_\infty^X] ] + \Ex_x[ \ind{ \tau^{(a,b)} \le \tau }  e^{-r \tau^{(a,b)} }\Ex_x [ e^{-r \theta_{\tau^{(a,b)}}\circ \tau  } g(X_{\theta_{\tau^{(a,b)}}\circ \tau + \tau^{(a,b)}})  \vert \f_{\tau_{(a,b)}}] ]\notag \\
    =&\Ex_x\left[  \int_{[0,\tau^{(a,b)})}  e^{-r t} g(X_t) d(-e^{-A_t})\right] + \Ex_x[ \ind{ \tau^{(a,b)} \le \tau }  e^{-r \tau^{(a,b)} } \Ex_{X_{\tau^{(a,b)}}} [e^{-r  \tau  } g(X_{\tau}) ] ] \notag \\
    =& \Ex_x\left[  \int_{[0,\tau^{(a,b)})}  e^{-r t} g(X_t) e^{-A_{t}} dA_t \right] + \Ex_x[ \ind{ \tau^{(a,b)} \le \tau }  e^{-r \tau^{(a,b)} } J_{\tau}(X_{\tau^{(a,b)}})] \notag\\
    =& \Ex_x\left[  \int_{[0,\tau^{(a,b)})}   e^{-r t-A_{t}} g(X_t) dA_t \right] + \Ex_x[e^{-r \tau^{(a,b)} } J_{\tau}(X_{\tau^{(a,b)}})\Ex_x[ \ind{ \tau^{(a,b)} \le \tau }   \vert \f_\infty^X]] \notag\\
    =& \Ex_x\left[  \int_{[0,\tau^{(a,b)})}  e^{-r t-A_{t}} g(X_t) dA_t \right] + \Ex_x[e^{-r \tau^{(a,b)} } J_{\tau}(X_{\tau^{(a,b)}})e^{-A_{\tau^{(a,b)}-}}] \notag\\
     =& \Ex_x\left[  \int_{[0,\tau^{(a,b)})}  e^{-rt- A_{t}} g(X_t) dA_t \right] + \Ex_x[e^{-r \tau^{(a,b)}- A_{\tau^{(a,b)}-}} (\ind{a,b}J_{\tau})(X_{\tau^{(a,b)}})]  \label{eq:zerlegung}
\end{align} for all $x\in [a,b]$. With that the claim follows from Lemma \ref{lemma:cont}.

\eqref{diff2} By the occupation density formula \cite[p.\ 104]{RogersWilliamsVol2}, $\lambda\vert_{(a,b)}= \frac{\psi}{\sigma}dx$ implies $A_t= \int_0^t \psi(X_s)ds$ for all $t\in [0, \tau^{(a,b)})$, $\PR_x$-a.s. for all $x\in I$. Setting $\tilde \psi :=\psi +r$ by \eqref{eq:zerlegung} we infer
\begin{align}
    J_{\tau}(x) &= \Ex_x\left[  \int_{[0,\tau^{(a,b)})}  e^{-rt- \int_0^t\psi(X_s)ds} \psi(X_t)g(X_t) dt \right] + \Ex_x\bigg[e^{-r \tau^{(a,b)}-\int_0^{\tau^{(a,b)}}\psi(X_s)ds} (\ind{a,b}J_{\tau})(X_{\tau^{(a,b)}})\bigg]\notag \\
    &=\Ex_x\left[  \int_{[0,\tau^{(a,b)})}  e^{- \int_0^t \tilde \psi(X_s)ds} \psi(X_t)g(X_t) dt \right] + \Ex_x\bigg[e^{-\int_0^{\tau^{(a,b)}}\tilde \psi(X_s)ds} (\ind{a,b}J_{\tau})(X_{\tau^{(a,b)}})\bigg]\label{eq:zerl2}
\end{align} for all $x\in (a,b)$.
Whenever $a,b\in I$, the functions $\tilde \psi$ and $\psi \cdot g\vert_{(a,b)}$ are Hölder continuous on $(a,b)$ and the dynamics of $X$ on $[a,b]$ coincide with the dynamics of a canonical diffusion. If $a\not \in I$ or $b\not \in I$, we cover $(a,b)$ by subintervals that are bounded away from the boundary of $I$ and argue separately for each subinterval. Now the claim follows from \cite[Theorem 13.16]{dynkin_markov2}.

\eqref{diff} We denote the characteristic operator of $X$ by $\mathfrak A$. By \eqref{cont} $J_\tau\vert_{(a,b)}\in \ccc^0([a,b])$. By \eqref{diff2} $J_\tau\vert_{(x_{i-1},x_i)}\in \ccc^2((x_{i-1},x_i))$ for all $i\in \{1,...,n\}$. Set $x:=x_i$ for some $i\in \{1,...,n-1\}$ and $\tau_h:=\tau^{(x-h,x+h)}$. By a generalized Itô formula, cf.\ \cite{Peskir2007}, we obtain
    \begin{align}
        &\frac{\Ex_x[J_{\tau}(X_{\tau_h})]- J_{\tau}(x)  }{\Ex_x[\tau_h]}\notag\\
        =& \frac{\Ex_x\big[  \int_0^{\tau_h}\ind{X_s\neq x} \mathcal AJ_{\tau}(X_s)ds + \frac{1}{2}\int_0^{\tau_h} \partial_x J_{\tau}(x+)- \partial_x J_{\tau}(x-)dl^x_s\big]}{\Ex_x[\tau_h]}\notag\\
        =& \frac{\Ex_x\big[  \int_0^{\tau_h} \ind{X_s\neq x}\mathcal AJ_{\tau}(X_s)ds\big]}{\Ex_x[\tau_h]}+  \frac{1}{2}(\partial_x J_{\tau}(x+)- \partial_x J_{\tau}(x-))\frac{\Ex_x[l^x_{\tau_h}]}{\Ex_x[\tau_h]} \label{eq:finitelim}
    \end{align} for all $h \in (0, (x-x_{i-1})\wedge (x_{i+1}-x))$ with $\partial_x J_{\tau}(x+)$ and $\partial_x J_{\tau}(x-)$ denoting the right- and left derivative of $J_\tau$ at $x$. By \eqref{eq:zerl2}, Lemma \ref{lemma:thm13.11} and Lemma \ref{lemma:thm13.12}
    \begin{align*}
        &\mathfrak A J_\tau (y)\\
        =& \mathfrak A \Ex_y\left[  \int_{[0,\tau^{(x_{i-1},x)})}  e^{- \int_0^t \tilde \psi(X_s)ds} \psi(X_t)g(X_t) dt \right]+ \mathfrak A \Ex_y\bigg[e^{-\int_0^{\tau^{(x_{i-1},x)}}\tilde \psi(X_s)ds} (\ind{x_{i-1},x}J_{\tau})(X_{\tau^{(x_{i-1},x)}})\bigg] \\
        =& \psi(y) \Ex_y\left[  \int_{[0,\tau^{(x_{i-1},x)})}  e^{- \int_0^t \tilde \psi(X_s)ds} \psi(X_t)g(X_t) dt \right] - g(y) \\
        &+ \psi(y) \Ex_y\bigg[e^{-\int_0^{\tau^{(x_{i-1},x)}}\tilde \psi(X_s)ds} (\ind{x_{i-1},x}J_{\tau})(X_{\tau^{(x_{i-1},x)}})\bigg] \\
        =&\psi(y) J_\tau(y) -g(y)
    \end{align*} for all $y\in (x_{i-1},x)$. Due to the Hölder conditions on $g,\psi$ and continuity of $J_\tau$ we find that $\lim_{y\nearrow x}\mathfrak A J_\tau(y)$ is finite. Analogously we obtain that $\lim_{y\searrow x}\mathfrak A J_\tau(y)$ is finite. Thus by \cite[Lemma A.4]{christensen2023time} the first summand of the right hand side of \eqref{eq:finitelim} has a finite limit for $h\searrow 0$. By \cite[Lemma 26]{BodnariuChristensenLindensjoe2022} we have $\lim_{h\searrow 0}\frac{\Ex_x[L^x_{\tau_h}]}{\Ex_x[\tau_h]}=\infty$. Thus, the right hand side has a finite limit for $h\searrow 0$ if and only if $\partial_x J_{\tau}(x+)- \partial_x J_{\tau}(x-)=0$. We finish the proof by showing that $\lim_{h\searrow 0}\frac{\Ex_x[J_{\tau}(X_{\tau_h})]- J_{\tau}(x)}{\Ex_x[\tau_h]}$ exists in $\R$. For that set 
    \begin{align*}
        \hat \psi (y)= \frac{\tilde \psi(y+) + \tilde \psi(y-) }{2} 
    \end{align*} for all $y\in (a,b)$. By \cite[Lemma 5.5]{bayraktar2022equilibria} and dominated convergence
    \begin{align*}
        &\vert \Ex_y[\ind{\tau^{(a,b)}>t}\hat{\psi}(X_t)] -\hat \psi (y) \vert
        =\left\vert \Ex_y[\ind{\tau^{(a,b)}>t}\hat{\psi}(X_t)] -\frac{\psi(y+)+\psi(y-)}{2} \right\vert\\
        \le&  \vert \Ex_y[\ind{\tau^{(a,b)}>t}\hat{\psi}(X_t)] - \Ex_y[\hat{\psi}(X_t)] \vert +\vert \Ex_x[\ind{X_t>y}(\hat{\psi}(X_t)-\psi(y+))]\vert\\
        &+ \vert \Ex_y[\ind{X_t<y}(\hat{\psi}(X_t)-\psi(x-))]\vert   + \left\vert \psi(y+)\left(\Ex_y[\ind{X_t>y}]-\frac{1}{2}\right)\right\vert \\
        &+\left\vert \psi(y-)\left(\Ex_y[\ind{X_t<y}]-\frac{1}{2} \right)\right\vert \stackrel{t\searrow 0}{\longrightarrow} 0.
    \end{align*} Thus also $\int_{(a,b)}\Ex_y[\ind{\tau^{(a,b)}>t}\hat{\psi}(X_t)] d\mu(y) \to \int_{(a,b)}\hat{\psi}(y)d\mu(y) $ for all finite measures $\mu$ on $(a,b)$, i.e.\ $\Ex_{\bigcdot}[\ind{\tau^{(a,b)}>t}\hat{\psi}(X_t)]\to \hat \psi$ weakly for $t\searrow 0$. Using $\Ex_y[\int_0^{t}\ind{X_s\in \{x_1,...,x_{n-1}\}}ds]=0$ for all $t\ge 0$, \eqref{eq:zerl2} implies
    \begin{align*}
        J_\tau(y)=\Ex_y\left[  \int_{[0,\tau^{(a,b)})}  e^{- \int_0^t \hat \psi(X_s)ds} g(X_t) dt \right] + \Ex_y\bigg[e^{-\int_0^{\tau^{(a,b)}}\hat \psi(X_s)ds} (\ind{a,b}J_{\tau})(X_{\tau^{(a,b)}})\bigg]
    \end{align*} for all $y \in(a,b)$. Thus by Lemma \ref{lemma:thm13.11} and Lemma \ref{lemma:thm13.12} we have $J_\tau\in \mathcal{D}_{\mathfrak A}(y)$ for all $y\in (a,b)$. By definition this means that $\lim_{h\searrow 0}\frac{\Ex_x[J_{\tau}(X_{\tau_h})]- J_{\tau}(x)}{\Ex_x[\tau_h]}$ exists, which finishes the proof.    
\end{proof}

\bibliographystyle{abbrv}
\bibliography{literature}

\end{document}